\newtheorem{theorem}{Theorem}[section]
\newtheorem{lemma}[theorem]{Lemma}
\newtheorem{corollary}[theorem]{Corollary}
\theoremstyle{definition}
\theoremstyle{remark}
\newtheorem{remark}[theorem]{Remark}
\numberwithin{equation}{section}
\newcommand{\IND}{\mathbbm{1}}
\newcommand{\De}{\mathrm{d}}
\newcommand{\cA}{\ensuremath{\mathcal A}}
\newcommand{\cB}{\ensuremath{\mathcal B}}
\newcommand{\cE}{\ensuremath{\mathcal E}}
\newcommand{\cF}{\ensuremath{\mathcal F}}
\newcommand{\cG}{\ensuremath{\mathcal G}}
\newcommand{\cP}{\ensuremath{\mathcal P}}
\newcommand{\cS}{\ensuremath{\mathcal S}}
\newcommand{\bbE}{\ensuremath{\mathbb E}}
\newcommand{\bbN}{\ensuremath{\mathbb N}}
\newcommand{\bbP}{\ensuremath{\mathbb P}}
\newcommand{\bbQ}{\ensuremath{\mathbb Q}}
\newcommand{\bbR}{\ensuremath{\mathbb R}}
\newcommand{\bbZ}{\ensuremath{\mathbb Z}}
\begin{document}

\definecolor{airforceblue}{RGB}{204, 0, 102}
\newenvironment{draft}
  {\par\medskip
  \color{airforceblue}%
  \medskip}

\title[]{Phase transition for level-set percolation of the membrane model in dimensions $d \geq 5$}


\author{Alberto Chiarini}
\address{Università degli Studi di Padova}
\curraddr{Department of Mathematics ``Tullio Levi-Civita'', via Trieste 63, 35131 Padova, Italy}
\email{chiarini@math.unipd.it}
\thanks{}

\author{Maximilian Nitzschner}
\address{Courant Institute of Mathematical Sciences, New York University}
\curraddr{251 Mercer Street, 10012 New York, NY, USA}
\email{maximilian.nitzschner@cims.nyu.edu}
\thanks{}

\begin{abstract}
We consider level-set percolation for the Gaussian membrane model on $\bbZ^d$, with $d \geq 5$, and establish that as $h \in \mathbb{R}$ varies, a non-trivial percolation phase transition for the level-set above level $h$ occurs at some finite critical level $h_\ast$, which we show to be positive in high dimensions. Along $h_\ast$, two further natural critical levels $h_{\ast\ast}$ and $\overline{h}$ are introduced, and we establish that $ -\infty <\overline{h} \leq h_\ast \leq h_{\ast\ast} < \infty$, in all dimensions. For $h > h_{\ast\ast}$, we find that the connectivity function of the level-set above $h$ admits stretched exponential decay, whereas for $h < \overline{h}$, chemical distances in the (unique) infinite cluster of the level-set are shown to be comparable to the Euclidean distance, by verifying conditions identified by Drewitz, R\'{a}th and Sapozhnikov~\cite{drewitz2014chemical} for general correlated percolation models. As a pivotal tool to study its level-set, we prove novel \textit{decoupling inequalities} for the membrane model. 
\end{abstract}

\subjclass[2010]{}
\keywords{Membrane model; level-set percolation; decoupling inequalities}
\dedicatory{}
\maketitle

\tableofcontents

\section{Introduction}

In the present work, we investigate the percolation phase transition for the level-set of the Gaussian membrane model on $\bbZ^d$, $d \geq 5$, which constitutes an example of a percolation model with strong, algebraically decaying correlations. Strongly correlated percolation models of this type have garnered considerable attention recently, with prominent examples being level-sets of the discrete Gaussian free field (GFF)~\cite{drewitz2018geometry,drewitz2017sign,duminil2020equality,popov2015decoupling,
rodriguez2013phase} or the Ginzburg-Landau interface model~\cite{rodriguez2016decoupling}, the vacant set of random interlacements~\cite{popov2015soft,sznitman2010vacant,sznitman2012decoupling} or random walk loop soups and their vacant sets~\cite{alves2019decoupling,chang2016phase}, all in dimensions $d \geq 3$. As our main result, we establish that a non-trivial percolation threshold $h_\ast(d)$ also exists for the level-set of the membrane model in $d \geq 5$, and this level is positive in high dimensions.

Among the aforementioned strongly correlated percolation models, the level-set of the GFF in $d \geq 3$ resembles our set-up most closely. This has been first investigated in the eighties~\cite{bricmont1987percolation,lebowitz1986percolation}, and following~\cite{rodriguez2013phase}, a very detailed understanding of its geometric properties has emerged during recent years. In particular,~\cite{drewitz2018geometry,drewitz2017sign} and the very recent breakthrough~\cite{duminil2020equality} show that the level-set of the GFF undergoes a \textit{sharp} phase transition at a level $h^{\mathrm{GFF}}_\ast(d) \in (0,\infty)$, and an (almost surely unique) infinite connected component exists in the upper level-set at level $h$ if $h < h^{\mathrm{GFF}}_\ast(d)$, and is absent for $h > h^{\mathrm{GFF}}_\ast(d)$. In fact, when combined with the results of~\cite{popov2015decoupling} and~\cite{drewitz2014chemical,procaccia2016quenched,
sapozhnikov2017random}, one has that in the entire subcritical regime $h > h^{\mathrm{GFF}}_\ast(d)$, the connection probability in the upper level-set admits an exponential decay in $d \geq 4$ with a logarithmic correction in $d = 3$, while in the supercritical regime $h < h^{\mathrm{GFF}}_\ast(d)$, the unique infinite cluster is ``well-behaved'' in the sense that its chemical distances are close to the Euclidean distance, and the simple random walk on it fulfills a quenched invariance principle, with Gaussian heat kernel bounds. We refer to~\cite{goswami2021radius} for an even more precise investigation of connection probabilities in the entire off-critical regime $h \neq h^{\mathrm{GFF}}_\ast(d)$ for the level-set percolation of the GFF. The \textit{near-critical} behavior of the level-set of the GFF on various cable graphs, including the cable graph on $\bbZ^d$, $d \geq 3$, has been subject of much interest with critical exponents obtained in the recent work~\cite{drewitz2022critical}.

The membrane model on $\bbZ^d$, $d \geq 5$, may be seen as a variant of the GFF, in which the gradient structure in the corresponding Gibbs measure is replaced by the discrete Laplacian. This choice gives rise to a discrete interface that favors constant curvature, and models of this type are used in the physics literature to characterize thermal fluctuations of biomembranes formed by lipid bilayers (see, e.g.~\cite{leibler2004equilibrium,lipowsky1995generic}).  While the membrane model retains some crucial properties of the GFF $-$ in particular one still has a domain Markov property $-$ it lacks some key features which have made the mathematical investigation of the GFF tractable, such as an elementary finite-volume random walk representation or a finite-volume FKG inequality. A number of classical results for the GFF have been verified in the context of the membrane model, in particular, the behavior of its maximum and entropic repulsion by a hard wall (see~\cite{buchholz2019probability,chiarini2016extremes,
cipriani2023maximum,kurt2007entropic,kurt2008entropic,kurt2009maximum,
sakagawa2003entropic,schweiger2020maximum}), whereas questions concerning its level-set percolation have remained open. In the present article, we make progress in this direction by establishing that a  phase transition occurs at a finite level $h_\ast(d)$ in $d \geq 5$, and by characterizing parts of its subcritical and supercritical regimes, similar in spirit to the above mentioned program for the GFF. A key tool  in our proofs is a \textit{decoupling inequality} for the membrane model, which we derive in Section~\ref{sec:Decoupling_inequality} akin to what was done for the GFF in~\cite{popov2015decoupling}. This decoupling inequality is instrumental to prove the finiteness of two further critical parameters $\overline{h}(d)$ and $h_\ast(d)$, with $\overline{h}(d) \leq h_\ast(d)$ and $h_\ast(d) \leq h_{\ast\ast}(d)$, characterizing a \textit{strongly} percolative regime $h < \overline{h}(d)$ and a \textit{strongly} non-percolative regime $h > h_{\ast\ast}(d)$, respectively. \medskip


We now describe the set-up and our results in more detail. Consider the lattice $\bbZ^d$ for $d \geq 5$, viewed as a graph with its standard nearest-neighbor structure. We will consider on $\bbZ^d$ the Gaussian membrane model with law $\bbP$ on $\bbR^{\bbZ^d}$, which fulfills the following:
\begin{equation}
\label{eq:MMProbMeasIntro}
\begin{minipage}{0.8\linewidth}
  under $\bbP$, the canonical field $\varphi = (\varphi_x)_{x \in \mathbb{Z}^d}$ is a centered Gaussian field with covariances $\bbE[\varphi_x\varphi_y] =  G( x, y)$ given by~\eqref{eq:convolution representation} for $x,y \in \bbZ^d$.
\end{minipage}
\end{equation}
For $x,y \in \bbZ^d$, the covariance $G(x,y)$ equals the expected number of intersections between the trajectories of two independent random walks, started in $x$ and $y$, respectively. We refer to~\eqref{eq:Finite_Vol_Def} and below it for a Gibbsian representation in finite volume. We study the geometry of the membrane model in terms of the level-set above $h \in \bbR$, which is defined as
 \begin{equation}
 E^{\geq h} = \{ x \in \bbZ^d\, : \, \varphi_x \geq h\}.
 \end{equation}
For $x \in \bbZ^d$, we are interested in the event $\{x \stackrel{ \geq h }{\longleftrightarrow} \infty\}$ that $x$ is contained in an infinite connected component of $E^{\geq h}$, and note that, due to translation invariance of $\bbP$, its probability does not depend on $x \in \bbZ^d$. One can then define the critical parameter for  percolation of $E^{\geq h}$ by
\begin{equation}
\label{eq:h_ast_Def}
h_\ast(d) = \inf\Big\{ h \in \bbR \, : \, \bbP\Big[ 0 \stackrel{ \geq h }{\longleftrightarrow} \infty \Big] = 0 \Big\} \in [-\infty,\infty],
\end{equation} 
where we understand $\inf \varnothing = \infty$. Our main result is that
\begin{equation}
\label{eq:NonTrivialPhaseTransition}
h_\ast(d) \in (-\infty,\infty), \qquad \text{for all }d \geq 5,
\end{equation}
so there is indeed a non-trivial percolation phase transition for $E^{\geq h}$.

In fact, to prove the finiteness of $h_\ast(d)$ we introduce two further critical levels, denoted by $\overline{h}(d)$ and $h_{\ast\ast}(d)$, which describe the critical values for a \textit{strongly percolative regime} $(h < \overline{h}(d))$, and a \textit{strongly non-percolative regime} $(h > h_{\ast\ast}(d))$, respectively, and fulfill by construction that $\overline{h}(d) \leq h_\ast(d) \leq h_{\ast\ast}(d)$. To capture the emergence of a strongly non-percolative behavior of $E^{\geq h}$, we introduce the critical parameter
\begin{equation}
\label{eq:h_astast_Def}
 h_{\ast\ast}(d) = \inf \left\lbrace h \in \mathbb{R} \, : \, \liminf_L \bbP\left[ B(0,L) \stackrel{\geq h}{\longleftrightarrow} \partial B(0,2L) \right] = 0 \right\rbrace \in [-\infty,\infty],
\end{equation}
with the event $\{B(0,L) \stackrel{\geq h}{\longleftrightarrow} \partial B(0,2L) \}$ denoting the existence of a nearest-neighbor path in $E^{\geq h}$ which connects $B(0,L)$, the closed box of side length $2L$ centered at the origin, to the outer boundary $\partial B(0,2L)$ of a concentric box of side length $4L$. In Theorem~\ref{thm:SubcriticalPhase}, we show that 
\begin{equation}
\label{eq:IntroFinitenessSubcrit}
h_\ast(d) \leq h_{\ast\ast}(d) < \infty, \qquad \text{for all }d \geq 5.
\end{equation}
Moreover, one has that
\begin{equation}
\label{eq:Sec4_properties}
\begin{minipage}{0.8\linewidth}
 for $h > h_{\ast\ast}(d)$, the connectivity function $\bbP[x \stackrel{\geq h}{\longleftrightarrow} y]$, denoting the probability that $x$ and $y$ are in the same connected component of $E^{\geq h}$, admits an exponential decay in $|x-y|$ for all $d \geq 6$, and a stretched exponential decay in $d = 5$.
\end{minipage}
\end{equation}
We refer to~\eqref{eq:ExpDecayStatement} and~\eqref{eq:StretchedExpDecay} for the corresponding bounds. 

Concerning the strongly percolative regime, we introduce another critical parameter $\overline{h}(d)$, such that for $h < \overline{h}(d)$, macroscopic connected components of $E^{\geq h}$ exist in large boxes with high probability, and are typically connected to such macroscopic connected components in neighboring boxes. We refer to Section~\ref{sec:Supercritical_Phase} for an exact definition of $\overline{h}(d)$. In Theorem~\ref{thm:SupercriticalPhase}, we show that 
\begin{equation}
\label{eq:IntroFinitenessSupercrit}
h_\ast(d) \geq \overline{h}(d) > -\infty, \qquad\text{for all }d \geq 5. 
\end{equation}
In the strongly percolative regime, we also establish that $E^{\geq h}$ is ``well-behaved'' in the sense that
\begin{equation}
\label{eq:Sec5_properties}
\begin{minipage}{0.8\linewidth}
 for $h < \overline{h}(d)$, there is a $\bbP$-almost surely unique infinite cluster in $E^{\geq h}$, on which the chemical distances are close to the Euclidean distance, large balls satisfy a shape theorem, and the simple random walk satisfies a quenched invariance principle, admitting Gaussian heat kernel bounds.
\end{minipage}
\end{equation}
Again, we refer to Theorem~\ref{thm:SupercriticalPhase} for precise statements. 

Combining~\eqref{eq:IntroFinitenessSubcrit} and~\eqref{eq:IntroFinitenessSupercrit} immediately implies the non-trivial percolation phase transition~\eqref{eq:NonTrivialPhaseTransition}. 
To show the decay properties~\eqref{eq:Sec4_properties}, we employ a static renormalization scheme, introduced in~\cite{sznitman2010vacant,
sznitman2012decoupling}, see also~\cite{popov2015soft} for the set-up used here with some modifications. On the other hand~\eqref{eq:Sec5_properties} is obtained by applying the results of~\cite{drewitz2014chemical,procaccia2016quenched,
sapozhnikov2017random}, verifying certain generic properties~\ref{inv_ergodic}--\ref{decorrelation} and~\ref{LocUniq}--\ref{Continuity} of a class of correlated percolation models recalled in Section~\ref{sec:Supercritical_Phase}. In both cases, a pivotal technical ingredient is a \textit{decoupling inequality} for the membrane model, which may informally be stated as follows: If $A_1$ and $A_2$ are increasing events in $\{0,1\}^{\bbZ^d}$, depending on two disjoint boxes of size $2N$ at distance at least $rN$, where $r > 0$, then for $N \in \bbN$ large enough,
\begin{equation}
\label{eq:IntroDecoup}
\begin{split}
\bbP\Big[(\mathbbm{1}_{\{\varphi_x \geq h\}})_{x \in \bbZ^d} &\in A_1 ,( \mathbbm{1}_{\{\varphi_x \geq h\}})_{x \in \bbZ^d}  \in A_2  \Big] \\
& \leq \bbP\left[(\mathbbm{1}_{\{\varphi_x \geq h- \varepsilon\}})_{x \in \bbZ^d} \in A_1\right] \cdot \bbP\left[( \mathbbm{1}_{\{\varphi_x \geq h-\varepsilon\}}  )_{x \in \bbZ^d} \in A_2 \right] + R(\varepsilon,N),
\end{split}
\end{equation}
with an error term $R(\varepsilon,N)$ smaller than a constant multiple of $N^d  \exp(-c \cdot \varepsilon^2 (rN)^{d-4})$, see Corollary~\ref{thm:Decoup} for the specific statements. In fact~\eqref{eq:IntroDecoup} follows from a more general \textit{conditional} decoupling inequality stated in Theorem~\ref{lem:DecouplingGeneral} (in which the assumption that $A_1$ is increasing may be dropped along with the finiteness of the set $A_1$ depends on). Similar decoupling inequalities have been instrumental for the study of much of the correlated percolation models mentioned earlier, see~\cite{alves2018conditional,alves2019decoupling,alves2021cylinders,popov2015decoupling,
popov2015soft,rodriguez2016decoupling,
sznitman2012decoupling}.

A natural question is whether the three critical parameters $h_\ast$, $h_{\ast\ast}$ and $\overline{h}$ actually coincide, which would correspond to a \textit{sharp} phase transition for $E^{\geq h}$. As was previously mentioned, the corresponding equality of parameters $h_\ast^{\mathrm{GFF}}$, $h_{\ast\ast}^{\mathrm{GFF}}$ and $\overline{h}^{\mathrm{GFF}}$ was established in~\cite{duminil2020equality} for the GFF in $d \geq 3$, and methods from~\cite{duminil2020equality} should be helpful in our set-up as well, also in light of the existence of a finite range decomposition of the field (see Remark~\ref{rem:Sec3Closing}).

In another direction, one may ask whether $h_\ast(d)$ is positive (or, more modestly, non-negative). In the case of the GFF, it was already established in~\cite{bricmont1987percolation} that $h_\ast^{\mathrm{GFF}}(d) \geq 0$ for all $d \geq 3$, using a contour argument. In our set-up, the absence of a maximum principle for the discrete bilaplacian seems to prevent the application of such a contour argument. We are however able to give a partial result addressing this matter, and show in Section~\ref{sec:PositivityHD} that
\begin{equation}
\label{eq:IntroPositivityHD}
\text{$h_\ast(d)$ is strictly positive for high dimension $d$}.
\end{equation}
In fact, our result is even stronger, and we establish in Theorem~\ref{thm:positivity high dimension} that the restriction of the upper level-set to a thick two-dimensional slab percolates in high dimension. The proof of~\eqref{eq:IntroPositivityHD} utilizes a decomposition of $\varphi$ into the sum of two independent Gaussian fields, borrowing an idea from~\cite{rodriguez2013phase}. To implement such a decomposition, we derive some asymptotics on $G(0,0)$ in high dimension, describing the expected number of times that two independent walkers started at the origin intersect. 

 One may also wonder whether some of the recently established results for the level-sets of the GFF, or random interlacements, continue to hold for the level-sets of the membrane model. In particular, precise decay asymptotics for the probability of isolating a macroscopic set from infinity by \textit{lower} level-sets of the GFF are known in the supercritical regime $h < h^{\mathrm{GFF}}_\ast$, see~\cite{chiarini2019entropic,nitzschner2018entropic,
 sznitman2015disconnection}, which prove a decay of capacity order, and study the GFF conditionally on such a disconnection event. For the membrane model, one may expect an exponential decay of such a disconnection probability at rate proportional to $N^{d-4}$ for $h < \overline{h}$. We also refer to~\cite{sznitman2019macroscopic} for a similar question studying the emergence of macroscopic holes in the upper level-set of the GFF, and to~\cite{li2014lower, nitzschner2017solidification,sznitman2017disconnection,
 sznitman2019bulk,
 sznitman2021excess,sznitman2021cost} for related questions for random interlacements. In the case of the GFF, level-set percolation has also been studied on other graphs, see, e.g.~\cite{drewitz2018geometry}. Furthermore, for certain transient trees, recent progress has been made by means of isomorphism theorems with random interlacements, see~\cite{abacherli2018level}. It would be interesting to explore level-set percolation also for the membrane model on other graphs, for which discrete PDE-type techniques are not available, see also~\cite{cipriani2023maximum} concerning extremes of the membrane model on $d$-regular trees.
\vspace{\baselineskip}

This article is organized as follows. In Section~\ref{sec:Notation_useful_results}, we introduce some notation and recall some basic results on the membrane model. We also establish controls concerning the Green function of the discrete bilaplacian, see Lemmas~\ref{lem:BesselGreen} and~\ref{lem:BulkVarBound} (part of the proof of the latter is presented in Appendix~\ref{sec:Appendix}). In Section~\ref{sec:Decoupling_inequality}, we establish the conditional decoupling inequalities in Theorem~\ref{lem:DecouplingGeneral}. The subcritical phase of the percolation model is investigated in Section~\ref{sec:Subcritical}, in which we establish the finiteness~\eqref{eq:IntroFinitenessSubcrit} of $h_{\ast\ast}(d)$ and the decay of the connectivity function~\eqref{eq:Sec4_properties}, both in Theorem~\ref{thm:SubcriticalPhase}. Section~\ref{sec:Supercritical_Phase} deals with the supercritical phase, and here we show the finiteness~\eqref{eq:IntroFinitenessSupercrit} of $\overline{h}(d)$, and the geometric well-behavedness of $E^{\geq h}$ below $\overline{h}(d)$, see~\eqref{eq:Sec5_properties}, in Theorem~\ref{thm:SupercriticalPhase}. In the final Section~\ref{sec:PositivityHD}, we establish the positivity~\eqref{eq:IntroPositivityHD} of $h_\ast(d)$ in high dimensions.

Finally, we give the convention we use concerning constants. By $c$, $c'$, $C$, ..., we denote generic
positive constants changing from place to place, that depend only on the dimension $d$. Constants in Section~\ref{sec:PositivityHD} will be entirely numerical, and \textit{not} depend on the dimension $d$. Numbered constants $c_1$, $c_2$, ... will refer to the value
assigned to them when they first appear in the text and dependence on additional parameters
is indicated in the notation.

\section{Notation and useful results}
\label{sec:Notation_useful_results}

In this section, we introduce basic notation and collect some important facts concerning random walks, potential theory and the membrane model. In the remainder of the article, we always assume that $d \geq 5$.

Let us start with some elementary notation. We let $\bbN  = \{0,1,2,...\}$ stand for the set of natural numbers. For real numbers $s, t$, we let $s \wedge t$ and $s \vee t$ stand for the minimum and maximum of $s$ and $t$, respectively, and we denote by $\lfloor s \rfloor$ the integer part of $s$, when $s$ is non-negative. We denote by $| \cdot |$ and $| \cdot |_\infty$ the Euclidean and $\ell^\infty$-norms on $\bbR^d$, respectively and also write $| A |$ for the Frobenius norm of a matrix $A \in \bbR^{d \times d}$. For $x \in \bbZ^d$ and $r \geq 0$, we write $B(x,r) = \{y \in \bbZ^d \, : \, |x-y|_\infty \leq r\} \subseteq \bbZ^d$ for the (closed) $\ell^\infty$-ball of radius $r \geq 0$ and center $x \in \bbZ^d$. If $x,y \in \bbZ^d$ fulfill $|x - y| = 1$, we call them neighbors and write $x \sim y$. A (nearest-neighbor) path is a sequence $\gamma = (x_i)_{0 \leq i \leq n}$, where $n \geq 0$ and $x_i \in \bbZ^d$, with $x_i \sim x_{i-1}$ for all $1 \leq i \leq n$. Similarly, if $x,y \in \bbZ^d$ fulfill $|x - y|_\infty = 1$, we call them $\ast$-neighbors, and a $\ast$-path is defined in the same way as a nearest-neighbor path, but with $\ast$-neighbors replacing neighbors. For a subset $K \subseteq \bbZ^d$, we let $|K|$ stand for the cardinality of $K$, write $K \subset \subset \bbZ^d$ if $|K| < \infty$, and we let $K^c = \bbZ^d \setminus K$ denote the complement of $K$. For subsets $K, H \subseteq \bbZ^d$, we define $d_\infty(K,H) = \inf \{|x-y|_\infty \, : \, x \in K, y \in H \}$. Moreover, we write $\partial K = \{ y \in \bbZ^d \setminus K \, : \, y \sim x \text{ for some } x \in K \}$ for the external boundary of $K$, $\partial_2 K = \{y \in \bbZ^d \setminus K \, : \, |x-y| \leq 2 \text{ for some }x \in K\}$ for the double-layer exterior boundary and define $\overline{K} = K \cup \partial K$. We write $\ell^p(K)$ for $p \geq 1$ to denote the space of functions $f \in \bbR^{\bbZ^d}$ such that $\sum_{ x \in K}|f(x)|^p < \infty$.

Let us now turn to the discrete-time simple random walk on $\bbZ^d$. We denote by $(X_n)_{n \geq 0}$ the canonical process on $(\bbZ^d)^{\bbN}$, and we write $P_x$ for the canonical law of a simple random walk started at $x \in \bbZ^d$, and  $E_x$  for the corresponding expectation. We let $\Gamma(\cdot,\cdot)$ stand for the Green function of the simple random walk, that is 
\begin{equation}\label{eq:greenfunction}
  \Gamma(x,y) = E_x \left[\sum_{n = 0}^\infty \mathbbm{1}_{\{X_n = y \} }\right] = \sum_{n=0}^\infty P_x[X_n = y],\qquad  x,\,y\in \bbZ^d,
\end{equation}
which is finite ($d\geq 5$) and symmetric. Moreover, $\Gamma(x,y) = \Gamma(0,x-y)  \stackrel{\text{def}}{=} \Gamma(x-y)$ due to translation invariance. For a subset $U \subseteq \bbZ^d$, we denote the entrance time into $U$ by $H_U = \inf\{n \geq 0 \, : \, X_n \in U \}$
and the exit time from $U$ by $T_U = \inf\{n \geq 0 \, : \, X_n \notin U\}$. We then define the Green function of the simple random walk killed when exiting $U$ as 
\begin{equation}
\Gamma_U(x,y) =  E_x \left[\sum_{n = 0}^\infty \mathbbm{1}_{\{X_n = y, n < T_U \} } \right],\qquad x,\,y\in \bbZ^d.
\end{equation}
We also regularly write $\Gamma_N(\cdot,\cdot)$ instead of $\Gamma_{B(0,N)}(\cdot,\cdot)$. 

We now turn to the membrane model in dimensions $d \geq 5$. The discrete Laplacian $\Delta : \bbR^{\bbZ^d} \rightarrow \bbR^{\bbZ^d}$ is defined by
$$
\Delta f(x) = \frac{1}{2d} \sum_{y \sim x} \big(f(y) - f(x)\big), \qquad f \in \bbR^{\bbZ^d}, x \in \bbZ^d.
$$ 
For $U \subset \subset \bbZ^d$ non-empty, we consider the probability measure $\bbP_U$ on $\bbR^{\bbZ^d}$, given by
\begin{equation}
\label{eq:Finite_Vol_Def}
\bbP_U(\mathrm{d}\varphi) = \frac{1}{Z_U} \exp\left(- \frac{1}{2} \sum_{x \in \bbZ^d} (\Delta \varphi_x)^2 \right) \prod_{x \in U} \mathrm{d}\varphi_x\prod_{x \in U^c} \delta_0(\mathrm{d} \varphi_x), 
\end{equation}
where $\delta_0$ stands for the Dirac measure at $0$, and  $Z_U$ is a normalization constant. This Gaussian probability measure characterizes the membrane model $\varphi$ on $U$ with Dirichlet boundary conditions. We use the abbreviation $\bbP_N = \bbP_{B(0,N)}$ and denote the covariance matrix of this finite-volume membrane model by $G_N(\cdot,\cdot)$. For fixed $x \in B(0,N)$, the function $G_N(x,\cdot)$ satisfies
\begin{equation}
\label{eq:biharmonicBox}
\begin{cases}
\Delta^2 G_N(x,y) = \mathbbm{1}_{\{y = x\}}, & y \in B(0,N), \\
G_N(x,y) = 0, & y \in \partial_2 B(0,N),
\end{cases}
\end{equation}
which follows directly from~\eqref{eq:Finite_Vol_Def}, since the operator $\Delta^2$, when restricted to functions vanishing outside of $B(0,N)$, is positive definite (see also above $(1)$ in~\cite{kurt2009maximum}). Since $d \geq 5$, it is well known that $\bbP_N$ converges weakly to the probability measure $\bbP$ given in~\eqref{eq:MMProbMeasIntro} (see~\cite[Proposition 1.2.3]{kurt2008entropic}). In particular, under $\bbP$ the canonical coordinates $(\varphi_x)_{x \in \bbZ^d}$ are a centered Gaussian process with covariance given by
\begin{equation}\label{eq:convolution representation}
G(x,y) = \sum_{z \in \bbZ^d} \Gamma(x,z)\Gamma(z,y), \qquad x,\,y\in \bbZ^d.
\end{equation}
For fixed $x \in \bbZ^d$, the function $G(x,\cdot)$ is biharmonic on $\mathbb{Z}^d \setminus \{x\}$, meaning that
\begin{equation}
\label{eq:biharmonicFullspace}
\Delta^2 G(x,y) = \mathbbm{1}_{\{y = x\}}, \qquad y\in \bbZ^d.
\end{equation}
Setting $P_{x,y} = P_x \otimes P_y$ and letting $(X,Y)$ with $X = (X_n)_{n \geq 0}$ and $Y = (Y_n)_{n \geq 0}$ denote the canonical coordinates in $(\bbZ^d)^{\bbN} \times (\bbZ^d)^{\bbN}$, one has the random walk representation 
\begin{equation}
G(x,y) = E_{x,y}\left[\sum_{n = 0}^\infty \sum_{m = 0}^\infty \mathbbm{1}_{\{X_n = Y_m \}} \right] = \sum_{n=0}^\infty (n+1) P_x[X_n=y]. 
\end{equation}
The function $G(\cdot,\cdot)$ is translation invariant, and one can write
\begin{equation}
\label{eq:G_translation_inv}
G(x,y) = G(x-y,0) \stackrel{\text{def}}{=} G(x-y),\qquad x,\,y\in \bbZ^d.
\end{equation} 
Moreover, one has the bound
\begin{equation}
\label{eq:BoundGreenFct}
G(x) \leq \frac{c_1}{|x|^{d-4} \vee 1 }, \qquad x \in \bbZ^d,
\end{equation}
see~\cite[Lemma 5.1]{sakagawa2003entropic}. By the same reference, one even has the asymptotics $G(x) \sim c_1 |x|^{4-d}$ for large $|x|$, but the upper bound~\eqref{eq:BoundGreenFct} will be sufficient for our purposes.

We now provide a useful representation for $G(\cdot)$ in terms of integrals of Bessel functions, which is instrumental to derive an asymptotic expansion for $G(0)$ when the dimension is large. 

\begin{lemma}
\label{lem:BesselGreen} Let $I_k(\cdot)$ be the modified Bessel function of order $k\in \bbN$, and set $I_{-k} \stackrel{\mathrm{def}}{=} I_k$ for $k\in \bbN$. Then, for all $d\geq 5$ and for all $x = (x_1,\ldots,x_d)\in \bbZ^d$, 
 \begin{equation}
   G(x) = \int_0^\infty t e^{-t} \prod_{i=1}^d I_{x_i}\Big(\frac{t}{d}\Big)\,\De t.
 \end{equation}
 As a consequence
 \begin{equation}\label{eq:expansion for G0}
   G(0) = 1 + \frac{3}{2 d} + o\Big(\frac{1}{d}\Big), \qquad \text{as }d \rightarrow \infty.
 \end{equation}
\end{lemma}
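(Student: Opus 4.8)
The plan is to derive the Bessel representation first and then extract the asymptotics by a careful large-$d$ expansion of the integrand.

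\medskip

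\textbf{Step 1: the Bessel integral representation.} Start from the random walk representation $G(x) = \sum_{n \geq 0}(n+1)P_0[X_n = x]$. The trick is to write $n+1 = \int_0^\infty t e^{-t}\,\frac{t^{n}}{n!}\,\De t \cdot \text{(something)}$; more precisely, since $\int_0^\infty t^{n+1} e^{-t}\,\De t = (n+1)!$, we have $n+1 = \frac{1}{n!}\int_0^\infty t^{n+1}e^{-t}\,\De t$. Hence
\begin{equation}
G(x) = \sum_{n \geq 0} \frac{1}{n!}\Big(\int_0^\infty t^{n+1}e^{-t}\,\De t\Big) P_0[X_n = x] = \int_0^\infty t e^{-t} \Big(\sum_{n\geq 0} \frac{t^n}{n!} P_0[X_n=x]\Big)\De t,
\end{equation}
with the interchange of sum and integral justified by Tonelli (everything nonnegative). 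The inner sum is the transition kernel of a continuous-time walk: for the simple random walk with one-step distribution uniform on the $2d$ neighbors, the characteristic function of $X_n$ is $\big(\frac{1}{d}\sum_{i=1}^d \cos\theta_i\big)^n$, so $\sum_n \frac{t^n}{n!}P_0[X_n=x] = \frac{1}{(2\pi)^d}\int_{[-\pi,\pi]^d} e^{-\mathrm{i}x\cdot\theta}\exp\big(\frac{t}{d}\sum_i\cos\theta_i\big)\De\theta = \prod_{i=1}^d\Big(\frac{1}{2\pi}\int_{-\pi}^\pi e^{-\mathrm{i}x_i\theta_i}e^{(t/d)\cos\theta_i}\De\theta_i\Big)$. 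The one-dimensional integral is precisely the standard integral representation $I_k(z) = \frac{1}{2\pi}\int_{-\pi}^\pi e^{z\cos\theta}\cos(k\theta)\,\De\theta$ of the modified Bessel function (the imaginary part vanishes by symmetry), giving $\prod_i I_{x_i}(t/d)$. This proves the representation.

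\medskip

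\textbf{Step 2: asymptotics of $G(0)$.} Setting $x = 0$ gives $G(0) = \int_0^\infty t e^{-t} I_0(t/d)^d\,\De t$. Now use the small-argument expansion $I_0(z) = 1 + \frac{z^2}{4} + \frac{z^4}{64} + O(z^6)$, so $\log I_0(z) = \frac{z^2}{4} - \frac{z^4}{64} + O(z^6)$, and therefore $I_0(t/d)^d = \exp\big(d\log I_0(t/d)\big) = \exp\big(\frac{t^2}{4d} - \frac{t^4}{64 d^3} + O(d^{-5})\big) = 1 + \frac{t^2}{4d} + O(d^{-2})$ for $t$ in compact sets, uniformly. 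Plugging in and using $\int_0^\infty t e^{-t}\,\De t = 1$ and $\int_0^\infty t^3 e^{-t}\,\De t = 6$ yields $G(0) = 1 + \frac{6}{4d} + o(1/d) = 1 + \frac{3}{2d} + o(1/d)$, as claimed.

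\medskip

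\textbf{Main obstacle.} The computation of the leading terms is routine; the real work is controlling the tail of the $t$-integral, where $t/d$ is not small and the expansion of $I_0$ is invalid. One needs a uniform bound showing $\int_{t \geq \delta d} t e^{-t} I_0(t/d)^d\,\De t$ is $o(1/d)$ (indeed exponentially small). This follows from the bound $I_0(z) \leq e^{z}$ for $z \geq 0$, giving $I_0(t/d)^d \leq e^{t}$, so the integrand is bounded by $t e^{-t}e^{t} = t$ — not integrable, so one must be sharper. A better bound is $I_0(z) \leq e^{z^2/4}$ for all $z \geq 0$ (which holds since $I_0(z) = \sum_k \frac{(z/2)^{2k}}{(k!)^2} \leq \sum_k \frac{(z^2/4)^k}{k!} = e^{z^2/4}$), giving $I_0(t/d)^d \leq e^{t^2/(4d)}$ and hence integrand $\leq t e^{-t + t^2/(4d)}$; for $t \leq d$ this is $\leq t e^{-3t/4}$, integrable, and one splits the range $\{t \leq \varepsilon d\}$ (dominated convergence with the expansion) from $\{\varepsilon d < t \leq d\}$ and $\{t > d\}$ (where $e^{-t+t^2/(4d)}$ is genuinely small), each contributing $o(1/d)$. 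Making this splitting quantitative, with a rate $o(1/d)$ rather than merely $o(1)$, via a second-order Taylor expansion with explicit remainder control on $\{t \leq \varepsilon d\}$, is the one place where care is required.
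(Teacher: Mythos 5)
Your derivation of the integral representation is correct, and it takes a genuinely different route from the paper. The paper quotes Montroll's formula $g(x;\lambda)=\lambda\int_0^\infty e^{-\lambda t}\prod_{i}I_{x_i}(t/d)\,\De t$ for the generating function $\sum_n\lambda^{-n}P_0[X_n=x]$ and recovers $G(x)$ by right-differentiating at $\lambda=1$, the exchange of differentiation and integration being justified there by the asymptotics $I_k(z)\sim e^z/\sqrt{2\pi z}$ (this is where $d\geq 5$ enters). You instead insert the identity $n+1=\frac{1}{n!}\int_0^\infty t^{n+1}e^{-t}\,\De t$ into $G(x)=\sum_n(n+1)P_0[X_n=x]$, interchange by Tonelli, and identify $\sum_n\frac{t^n}{n!}P_0[X_n=x]$ by Fourier inversion on $[-\pi,\pi]^d$ as $\prod_i I_{x_i}(t/d)$. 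This is self-contained (it effectively reproves Montroll's identity), and the interchanges are free of charge since all terms are nonnegative; what it buys is an argument with no limit-exchange subtleties, at the cost of redoing a computation the paper simply cites.

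There is, however, a genuine flaw in your tail control for the expansion of $G(0)$. From $I_0(z)\leq e^{z^2/4}$ you get the integrand bound $t\,e^{-t+t^2/(4d)}$, and you claim this is ``genuinely small'' on $\{t>d\}$. That is false on the far tail: at $t=4d$ the factor $e^{-t+t^2/(4d)}$ equals $1$, and for $t>4d$ it increases to infinity, so your proposed dominating function is not even integrable on $\{t>4d\}$. The quadratic-exponent bound is too lossy for large arguments, where $I_0(z)\asymp e^z/\sqrt{2\pi z}$, and it cannot see the polynomial correction that makes the integral converge. The repair is to use, for $t\geq d$ (i.e.\ $z=t/d\geq 1$), a bound of the form $I_0(z)\leq C e^z/\sqrt{2\pi z}$, which gives $t e^{-t} I_0(t/d)^d\leq C^d\, t\,\big(d/(2\pi t)\big)^{d/2}$; this is integrable at infinity precisely because $d\geq 5$ (the same place the dimension restriction is used in the paper's justification), and its integral over $t\geq d$ is in fact exponentially small in $d$. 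With that substitute for the far tail, your splitting into $\{t\leq\varepsilon d\}$, $\{\varepsilon d<t\leq d\}$, $\{t>d\}$ and the quantitative second-order Taylor expansion on the first range do yield $G(0)=1+\frac{3}{2d}+o(1/d)$; the paper itself is terse at this point, only indicating that one should insert the series $I_0(z)=\sum_m (z^2/4)^m/(m!)^2$ into $\int_0^\infty t e^{-t} I_0(t/d)^d\,\De t$, so spelling out the tail estimate as above is a worthwhile addition rather than a detour.
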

\begin{proof} For $x\in \bbZ^d$ and $\lambda\geq 1$, we consider the function
\begin{equation}
  \Gamma(x;\lambda) = \sum_{n=0}^\infty \lambda^{-n} P_0[X_n=x].
\end{equation}
In view of~\eqref{eq:greenfunction} we have  $\Gamma(x;1) = \Gamma(x)$. Furthermore, it is immediate to see that for all $d\geq 5$
\begin{equation}\label{eq:right derivative}
  \frac{\partial^+}{\partial \lambda}    \Gamma(x;\lambda) \Big\vert_{\lambda =1 }= - \sum_{n=0}^\infty n  P_0[X_n=x] = \Gamma(x) - G(x)
\end{equation}
(where $\frac{\partial^+}{\partial \lambda}$ stands for the right-derivative). By (2.10) in~\cite{montroll1956random}, $\Gamma(x;\lambda)$ admits the representation 
\begin{equation}\label{eq:g representation}
  \Gamma(x;\lambda) = \lambda \int_0^\infty e^{-\lambda t} \prod_{i=1}^d I_{x_i}\Big(\frac{t}{d}\Big)\,\De t.
\end{equation} 
By right-differentiating~\eqref{eq:g representation} in $\lambda = 1$, for $d\geq 5$, we obtain 
\begin{equation}
  \frac{\partial^+}{\partial \lambda}    \Gamma(x;\lambda)\Big\vert_{\lambda = 1} = \Gamma(x) - \int_0^\infty t e^{-t} \prod_{i=1}^d I_{x_i}\Big(\frac{t}{d}\Big)\,\De t,
\end{equation}
which in view of~\eqref{eq:right derivative} gives the desired conclusion. Note that exchanging the right differentiation with the integration is possible in dimension $d\geq 5$ as $I_k(z) \sim e^z/\sqrt{2\pi z}$ as $z\to\infty$ (see for example (9.7.1) in \cite{abramovitz1964handbook}).

In order to prove~\eqref{eq:expansion for G0}, we note that 
\begin{equation} \label{eq:integralform}G(0) = \int_0^\infty t e^{-t} \Big(I_0\Big(\frac{t}{d}\Big)\Big)^d \,\De t
\end{equation} 
  and use the analytic series expansion for the Bessel function (see (9.6.10) in \cite{abramovitz1964handbook}) 
\begin{equation}
  I_0(z) = \sum_{m=0}^\infty \frac{(\tfrac{1}{4}z^2)^m}{m! m!}
\end{equation}
in~\eqref{eq:integralform} to deduce the desired conclusion.
\end{proof}

We will also use an approximation $\overline{G}_N$ of the finite-volume Green function $G_N$, which is defined as
\begin{equation}
\overline{G}_N(x,y) = \sum_{z \in \bbZ^d} \Gamma_N(x,z)\Gamma_N(z,y), \qquad x,\,y \in \bbZ^d.
\end{equation}
This approximation admits a random walk representation, which is as follows: let $T_N$ and $\widetilde{T}_N$ be the respective exit times of $X$ and $Y$ from $B(0,N)$, then
\begin{equation}
\overline{G}_N(x,y) = E_{x,y}\left[\sum_{n = 0}^{T_N -1} \sum_{m = 0}^{\widetilde{T}_N -1} \mathbbm{1}_{\{X_n = Y_m\} }  \right], \qquad x,\,y \in \bbZ^d.
\end{equation}
Note that by~\cite[Corollary 2.5.5]{kurt2008entropic}, $G_N(x,\cdot)$ is well approximated by $\overline{G}_N(x,\cdot)$ uniformly in the bulk of $B(0,N)$, i.e.~for every  $\delta \in (0,1)$:
\begin{equation}
\label{eq:BulkApproxBound}
\sup_{y \in B(0,\delta N)} |G_N(x,y) - \overline{G}_N(x,y)| \leq \frac{C(\delta)}{N^{d-4}}, \qquad \text{for all } x\in B(0,\delta N).
\end{equation}
For most of the applications, the bound~\eqref{eq:BulkApproxBound} will be sufficiently precise. Let us mention that bounds on the quantity on the left-hand side of~\eqref{eq:BulkApproxBound} as $\delta$ tends to one in $N$ are more delicate. Such bounds can be obtained for instance using~\cite[(3.9)--(3.11)]{cipriani2023maximum} (which utilizes an explicit representation of the covariance of the membrane model in finite volume obtained in~\cite{vanderbei1984probabilistic}).

We now provide a useful decomposition of the membrane model. For $K \subseteq \bbZ^d$, we set 
$\mathcal{F}_K = \sigma(\varphi_x \, : \, x \in K)$,
and we abbreviate $\mathcal{F} = \mathcal{F}_{\bbZ^d}$.  Given $U \subset \subset \bbZ^d$, we define the random fields
\begin{equation}
\label{eq:h_Def}
\xi^U_x = \mathbb{E}[ \varphi \, | \, \mathcal{F}_{U^c}], \qquad x \in \bbZ^d,
\end{equation}
 and
\begin{equation}
\label{eq:psi_Def}
\psi_x^U = \varphi_x - \xi^U_x, \qquad x \in \bbZ^d.
\end{equation}
One has the decomposition
\begin{equation}
\label{eq:DecompMarkov}
\varphi_x = \xi^U_x + \psi^U_x, \qquad x \in \bbZ^d,
\end{equation}
which fulfills the following property (see~\cite[Lemma 2.2]{cipriani2013high}):
\begin{equation}
\label{eq:IndependenceMarkov}
\begin{minipage}{0.9\linewidth}
  The field $(\psi^U_x)_{x \in \bbZ^d}$ is independent of $\mathcal{F}_{U^c}$ (in particular, of $(\xi^U_x)_{x \in \bbZ^d}$) and is distributed as a membrane model with Dirichlet boundary conditions outside $U$.
\end{minipage}
\end{equation}
With this, we can give the following variance bound of $\xi_x^{B(0,N)}$ that is used in the proof of the decoupling inequality~\eqref{eq:DecouplingMainThmClaim}. We prove a version in the bulk below, and present in the Appendix~\ref{sec:Appendix} a refined bound that is valid close to the boundary using methods from~\cite{muller2019estimates}.
\begin{lemma}
\label{lem:BulkVarBound}
Let $\delta \in (0,1)$ and $x \in B(0,\delta N)$, then we have
\begin{align}
\label{eq:BulkVarBound}
\textnormal{Var}[\xi_x^{B(0,N)}] & \leq \frac{c_2(\delta)}{N^{d-4}}.
\end{align}
Moreover, the following refined bound holds when $(1-\delta)N \geq 10$ and $x \in B(0,\delta N)$
\begin{equation}
\label{eq:BehaviorBoundary}
\textnormal{Var}[\xi_x^{B(0,N)}] \leq \frac{c_2'}{((1-\delta)N)^{d-4}}.
\end{equation}
\end{lemma}

\begin{proof}
One has the decomposition
\begin{equation}
\begin{split}
\text{Var}[\varphi_x] & \stackrel{\eqref{eq:IndependenceMarkov}}{=} \text{Var}[\psi_x^{B(0,N)}] +  \text{Var}[\xi_x^{B(0,N)}] \\
\Rightarrow \qquad \text{Var}[\xi_x^{B(0,N)}] & = G(x,x) - G_N(x,x) \\
& = \underbrace{G(x,x) - \overline{G}_N(x,x)}_{\stackrel{\text{def}}{=} \mathcal{A}_N(x)} + \underbrace{\overline{G}_N(x,x) - G_N(x,x)}_{\stackrel{\text{def}}{=} \mathcal{B}_N(x)}.
\end{split}
\end{equation}
By~\eqref{eq:BulkApproxBound}, we have
\begin{equation}
\label{eq:ErrorB_bound}
|\mathcal{B}_N(x)| \leq \frac{C(\delta)}{N^{d-4}}.
\end{equation}
We turn to the bound on $\mathcal{A}_N(x)$. Note that
\begin{equation}
G(x,x) = \overline{G}_N(x,x) + E_{x,x}\Bigg[\sum_{n = T_N}^\infty \sum_{ m = 0 }^{\widetilde{T}_N -1 } \mathbbm{1}_{\{ X_n = Y_m \}}  \Bigg] +  E_{x,x}\Bigg[\sum_{n = 0}^\infty \sum_{ m = \widetilde{T}_N }^{\infty} \mathbbm{1}_{\{ X_n = Y_m \}}  \Bigg].
\end{equation}
Thus, we see that (using the strong Markov property for $Y$ at $\widetilde{T}_N$ and symmetry)
\begin{equation}
\label{eq:ErrorA_bound}
\begin{split}
|\mathcal{A}_N(x)| & \leq 2 E_{x,x}\Bigg[\sum_{n = 0}^\infty \sum_{ m = \widetilde{T}_N }^{\infty} \mathbbm{1}_{\{ X_n = Y_m \}}  \Bigg] = 2 E_x \Bigg[ \sum_{n = 0}^\infty \sum_{z \in \bbZ^d} \mathbbm{1}_{\{X_n = z\}} E_x\Bigg[ E_{Y_{\widetilde{T}_N}} \Bigg[\sum_{m = 0}^\infty \mathbbm{1}_{ \{Y_m = z \} } \Bigg] \Bigg] \Bigg] \\
& = 2 \sum_{n = 0}^\infty \sum_{m = 0}^\infty \sum_{z \in \bbZ^d}  \sum_{y \in \partial B(0,N)} P_x[X_n = z] P_x[Y_{\widetilde{T}_N} = y]  P_y[Y_m = z] \\
& = 2 \sum_{y \in \partial B(0,N)} P_x[Y_{\widetilde{T}_N} = y]G(x,y) \stackrel{\eqref{eq:BoundGreenFct}}{\leq} \frac{2c_1 }{c ((1-\delta) N)^{d-4}} \sum_{y \in \partial B(0,N)} P_x[Y_{\widetilde{T}_N} = y] \\
& \leq \frac{c'}{((1-\delta) N)^{d-4}}.
\end{split}
\end{equation}
The claim follows upon combining~\eqref{eq:ErrorB_bound} and~\eqref{eq:ErrorA_bound}. The proof of the refined bound~\eqref{eq:BehaviorBoundary} can be found in the Appendix~\ref{sec:Appendix}.
\end{proof}

Finally, we conclude this section with some general properties of the upper level-set $E^{\geq h}$. Let $(\theta_z)_{z \in \bbZ^d}$ denote the group of space shifts on $\bbR^{\bbZ^d}$, defined by
\begin{equation}
\label{eq:Lattice_shift_def}
\theta_z : \bbR^{\bbZ^d} \rightarrow \bbR^{\bbZ^d}, \qquad (\theta_z \varphi)(x) = \varphi( x + z), \qquad \text{for }\varphi \in \bbR^{\bbZ^d}, x, z \in \bbZ^d.
\end{equation}
Since $G$ is translation invariant (see above~\eqref{eq:G_translation_inv}), the probability measure $\bbP$ is translation invariant as well (meaning that $\theta_x \circ \bbP = \bbP$). In fact, one has a $0$-$1$ law for the probability of the existence of an infinite connected component in $E^{\geq h}$, which follows from a mixing property in the same way as~\cite[Lemma 1.5]{rodriguez2013phase}. 

\begin{lemma}
\label{lem:Ergodic}
For every $A,B \in \mathcal{F}$, one has the mixing property
\begin{equation}
\label{eq:Mixing}
\lim_{z \rightarrow \infty} \bbP[A \cap \theta_z^{-1}(B)] = \bbP[A] \bbP[B].
\end{equation}
In particular,  $(\theta_z)_{z \in \bbZ^d}$ is ergodic with respect to $\bbP$. Moreover, let $\eta(h) = \bbP\big[ 0 \stackrel{ \geq h }{\longleftrightarrow} \infty \big]$, then
\begin{equation}
\label{eq:01law}
\bbP[\textnormal{$E^{\geq h}$ contains an infinite connected component}] = \begin{cases}
0, & \text{ if } \eta(h) = 0, \\
1, & \text{ if } \eta(h) > 0.
\end{cases}
\end{equation}
\end{lemma}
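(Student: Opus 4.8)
The substance of the statement is the mixing property~\eqref{eq:Mixing}: once it is available, ergodicity and the $0$--$1$ law~\eqref{eq:01law} follow by soft arguments, exactly as for the GFF in~\cite[Lemma 1.5]{rodriguez2013phase}. \textbf{Reduction to local events.} The cylinder events, i.e.\ those belonging to $\mathcal{F}_K$ for some $K\subset\subset\bbZ^d$, form an algebra generating $\mathcal{F}$, so for any $A,B\in\mathcal{F}$ and $\varepsilon>0$ there are cylinder events $A_\varepsilon,B_\varepsilon$ with $\bbP[A\triangle A_\varepsilon]<\varepsilon$ and $\bbP[B\triangle B_\varepsilon]<\varepsilon$. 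Since each $\theta_z$ preserves $\bbP$ (translation invariance, recalled above~\eqref{eq:Lattice_shift_def}), one gets $|\bbP[A\cap\theta_z^{-1}(B)]-\bbP[A_\varepsilon\cap\theta_z^{-1}(B_\varepsilon)]|\leq\bbP[A\triangle A_\varepsilon]+\bbP[B\triangle B_\varepsilon]<2\varepsilon$ and likewise $|\bbP[A]\bbP[B]-\bbP[A_\varepsilon]\bbP[B_\varepsilon]|<2\varepsilon$; letting $\varepsilon\downarrow 0$ shows that it suffices to prove~\eqref{eq:Mixing} for $A\in\mathcal{F}_{K_1}$, $B\in\mathcal{F}_{K_2}$ with $K_1,K_2\subset\subset\bbZ^d$.

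\textbf{The Gaussian computation.} Fix such $A,B$, identified with Borel subsets of $\bbR^{K_1}$ and $\bbR^{K_2}$. For $|z|_\infty$ large one has $\bbP[A\cap\theta_z^{-1}(B)]=\mu_z(A\times B)$, where $\mu_z$ is the law of the centered Gaussian vector $\big((\varphi_x)_{x\in K_1},(\varphi_{x+z})_{x\in K_2}\big)$. Its covariance matrix $\Sigma_z$ has the $z$-independent diagonal blocks $(G(x-x'))_{x,x'\in K_1}$ and $(G(x-x'))_{x,x'\in K_2}$ (by~\eqref{eq:G_translation_inv}), while the entries of the off-diagonal block are $G(x-x'-z)$ with $x\in K_1,\,x'\in K_2$, which tend to $0$ as $z\to\infty$ by~\eqref{eq:BoundGreenFct}. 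Hence $\Sigma_z\to\Sigma_\infty$, the block-diagonal matrix, and $\Sigma_\infty$ is non-degenerate: indeed, by~\eqref{eq:convolution representation}, $(G(x-x'))_{x,x'\in K}$ is the Gram matrix of $\{g(x,\cdot):x\in K\}\subset\ell^2(\bbZ^d)$, and the translates of $g$ are linearly independent there, as the Fourier transform of $g$ is non-vanishing Lebesgue-a.e.\ on the torus. Therefore, for $z$ large, $\Sigma_z$ is non-degenerate and the Gaussian densities of $\mu_z$ converge pointwise to that of $\mu_\infty:=\mathcal{N}(0,\Sigma_\infty)$, hence in $L^1$ by Scheff\'e's lemma; so $\mu_z\to\mu_\infty$ in total variation and $\bbP[A\cap\theta_z^{-1}(B)]=\mu_z(A\times B)\to\mu_\infty(A\times B)=\bbP[A]\bbP[B]$, the last equality because $\mu_\infty$ is a product measure and $(\varphi_{x+z})_{x\in K_2}$ has the same law as $(\varphi_x)_{x\in K_2}$. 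This proves~\eqref{eq:Mixing}.

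\textbf{Ergodicity and the $0$--$1$ law.} If $A\in\mathcal{F}$ satisfies $\theta_z^{-1}(A)=A$ for all $z$, then~\eqref{eq:Mixing} with $B=A$ gives $\bbP[A]=\bbP[A]^2$, so $\bbP[A]\in\{0,1\}$; thus $(\theta_z)_{z\in\bbZ^d}$ is ergodic. Now let $\mathcal{I}$ be the event that $E^{\geq h}$ has an infinite connected component. Since $E^{\geq h}(\theta_z\varphi)=E^{\geq h}(\varphi)-z$, the event $\mathcal{I}$ is shift-invariant, so $\bbP[\mathcal{I}]\in\{0,1\}$. As $\mathcal{I}=\bigcup_{x\in\bbZ^d}\{x\stackrel{\geq h}{\longleftrightarrow}\infty\}$, translation invariance gives $\bbP[\mathcal{I}]\leq\sum_{x}\bbP[x\stackrel{\geq h}{\longleftrightarrow}\infty]=0$ when $\eta(h)=0$, whereas $\bbP[\mathcal{I}]\geq\bbP[0\stackrel{\geq h}{\longleftrightarrow}\infty]=\eta(h)>0$ forces $\bbP[\mathcal{I}]=1$ when $\eta(h)>0$; this is~\eqref{eq:01law}.

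The only slightly delicate point is the last part of the Gaussian computation: one needs $\mu_z(A\times B)\to\mu_\infty(A\times B)$ for \emph{arbitrary} Borel $A\times B$, not only for $\mu_\infty$-continuity sets, which is why one passes through non-degeneracy of $\Sigma_\infty$ and total-variation (rather than merely weak) convergence. Everything else is routine.
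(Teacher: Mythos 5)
Your proof is correct and follows the same route as the paper: verify the mixing property for cylinder events using the covariance decay~\eqref{eq:BoundGreenFct}, extend to general events by standard approximation, and deduce ergodicity and the $0$--$1$ law~\eqref{eq:01law} as in~\cite[Lemma 1.5]{rodriguez2013phase}. The details you supply (non-degeneracy of the limiting covariance and total-variation convergence via Scheff\'e) are a valid way of filling in what the paper leaves as "easily verified."
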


\begin{proof}
The mixing property~\eqref{eq:Mixing} is easily verified if $A$ and $B$ depend on finitely many coordinates using~\eqref{eq:BoundGreenFct}, and the general case follows by standard approximation techniques. 

The $0$-$1$ law~\eqref{eq:01law} follows immediately from ergodicity in a standard fashion. 
\end{proof}

Note that~\eqref{eq:01law} implies that for $h > h_\ast$ there is $\bbP$-a.s.~no infinite connected component in $E^{\geq h}$, and for $h < h_\ast$, $\bbP$-a.s.~there exists an infinite connected component in $E^{\geq h}$. As is the case for the GFF in~\cite[Remark 1.6]{rodriguez2013phase}, we even have the following.

\begin{lemma}
If $h < h_\ast$, the infinite connected component of $E^{\geq h}$ is $\bbP$-a.s.~unique.  
\end{lemma}
\begin{proof}
This follows since $(\mathbbm{1}_{\{ \varphi_x \geq h\} })_{x \in \bbZ^d}$ is translation invariant and has the finite energy property, see~\cite[Theorem 12.2]{haggstrom2006uniqueness}, using the fact that conditionally on $\cF_{\mathbb{Z}^d \setminus \{ x\} }$, $\varphi_x$ has a non-degenerate Gaussian distribution (with mean depending only on $\mathcal{F}_{\partial_2 \{x\}}$ and positive variance).
\end{proof}

\section{Decoupling inequality}
\label{sec:Decoupling_inequality}

In this section, we show a conditional decoupling inequality for the membrane model in Theorem~\ref{lem:DecouplingGeneral}, which is instrumental for the investigation of the non-trivial phase transition for level-set percolation in the following sections. 

Let us first introduce some notation. Note that there is a natural partial order $\leq$ on the space $\bbR^{\bbZ^d}$, where for $\phi, \phi' \in \bbR^{\bbZ^d}$, we say that $\phi \leq \phi'$ if and only if $\phi_x \leq \phi'_x$ for all $x \in \bbZ^d$. We will also denote by the same symbol the induced partial order on the space $\{0,1\}^{\bbZ^d}$. A function $f : \bbR^{\bbZ^d} \rightarrow \bbR$ is called \textit{increasing} (resp.~\textit{decreasing}), if
\begin{equation}
\text{for all }\phi,\phi' \in \bbR^{\bbZ^d} \text{ with }\phi \leq \phi'\text{, one has }f(\phi) \leq f(\phi') \text{ (resp. $f(\phi) \geq f(\phi')$)}.
\end{equation}
Moreover, we will say that for $K \subseteq \bbZ^d$ a (measurable) function $f : \bbR^{\bbZ^d} \rightarrow \bbR$ is \textit{supported on $K$} if for any $\phi,\phi' \in \bbR^{\bbZ^d}$ with $\phi_x = \phi'_x$ for all $x \in K$, one has $f(\phi) = f(\phi')$. This is the case if and only if $f$ is $\mathcal{F}_K$-measurable, where we recall the notation $\mathcal{F}_K = \sigma(\varphi_y \, : \, y \in K)$.

 We define the canonical maps $\Psi_x : \{0,1\}^{\bbZ^d} \rightarrow \{0,1\}$, by $\Psi_x(\zeta) = \zeta_x$ for $x \in \bbZ^d$, and furthermore define $\cG = \sigma(\Psi_x \, : \, x \in \bbZ^d)$. We say that an event $A \in \cG$ is \textit{increasing} (resp.~\textit{decreasing}), if
\begin{equation}
\label{eq:IncreasingDecreasing}
\text{for all }\zeta,\zeta' \in \{0,1\}^{\bbZ^d} \text{ with }\zeta \in A \text{ and } \zeta \leq \zeta' \text{ (resp. $\zeta \geq \zeta'$), one has }\zeta' \in A.
\end{equation}
Moreover, for $h \in \bbR$, we denote by $\bbP_h$ the law of $(\mathbbm{1}_{\{ \varphi_x \geq h\} })_{x \in \bbZ^d}$ on $\{0,1\}^{\bbZ^d}$ under $\bbP$, where we recall that $(\varphi_x)_{x \in \bbZ^d}$ under $\bbP$ is the membrane model. \medskip

We now state the main result of this section, which is the aforementioned conditional decoupling inequality for the membrane model. Its proof uses the decomposition~\eqref{eq:DecompMarkov} of the membrane model, in a way comparable to the case of the GFF treated in~\cite{popov2015decoupling}.
Let us anticipate that, for the conditional decoupling inequality to be useful, one needs a meaningful bound on the error term $\bbP[H_\varepsilon^c]$ in \eqref{eq:ConditionalDecoupling}. Unlike the GFF case, the lack of a random walk representation of the finite-volume Green function will later force us to perform a decomposition with respect to $B(x_2,(1+r) N)$ (instead of $K_1^c$ in~\cite{popov2015decoupling}), bringing into play the variance bound of Lemma~\ref{lem:BulkVarBound}. For the statement of the following claim, we recall the definition of the field $(\xi_x^U)_{x \in \bbZ^d}$ for $U \subset\subset \bbZ^d$ in~\eqref{eq:h_Def}. 

\begin{theorem} 
\label{lem:DecouplingGeneral}
Let $\varepsilon>0$, $U\subset\subset \bbZ^d$, $f_1 : \bbR^{\bbZ^d} \rightarrow [0,1]$ any function supported in $U^c$, and $f_2:\bbR^{\bbZ^d}\to [0,1]$ an increasing function supported in $K\subseteq U$. Define 
\begin{equation}
\label{eq:BadEventDef}
  H_\varepsilon = \left\{ \sup_{x \in K} |\xi_x^U| \leq \tfrac{\varepsilon}{2} \right\},
\end{equation}
On the event $H_\varepsilon$ one has
\begin{equation}
\label{eq:ConditionalDecoupling}
\bbE[f_2(\varphi- \varepsilon)] - \bbP[H_\varepsilon^c]  \leq \bbE\big[f_2(\varphi) \,\big|\, \cF_{U^c} \big]  \leq \bbE[f_2(\varphi + \varepsilon) ] + \bbP[H_\varepsilon^c].
\end{equation}
Furthermore,
\begin{equation}
  \label{eq:DecouplingGeneral}
  \bbE[f_1(\varphi)f_2(\varphi - \varepsilon)] - 2 \bbP[H^c_\varepsilon] \leq \bbE[f_1(\varphi) f_2(\varphi)] \leq \bbE[f_1(\varphi)] \bbE[f_2(\varphi+\varepsilon)] + 2\bbP[H^c_\varepsilon].
  \end{equation}
\end{theorem}

 \begin{proof}
We will prove the claim similarly as in~\cite[Theorem 1.2,  Corollary 1.3]{popov2015decoupling}, and recall the main steps of the argument for the convenience of the reader. 
Consider the decomposition~\eqref{eq:DecompMarkov} where we abbreviate $\xi_x = \xi^U_x$, and $\psi_x = \psi_x^{U}$ and note that $H_\varepsilon$ is $\cF_{U^c}$-measurable (recall~\eqref{eq:h_Def}). Now suppose that $\widetilde{\xi}$ is an independent copy of $\xi$, and let $\widetilde{\varphi} = \psi + \widetilde{\xi}$. This field has the same law as $\varphi$. We also define $\widetilde{H}_\varepsilon$ as in~\eqref{eq:BadEventDef} with $\widetilde{\xi}$ replacing $\xi$. Then,
\begin{equation}
\begin{split}
\bbE[f_2(\varphi) \, | \, \cF_{U^c}] \mathbbm{1}_{H_\varepsilon}& = \bbE[f_2(\widetilde{\varphi}+\xi - \widetilde{\xi}) \, | \, \cF_{U^c}] \mathbbm{1}_{H_\varepsilon} \\
& = \bbE[f_2(\widetilde{\varphi}+\xi - \widetilde{\xi})\mathbbm{1}_{H_\varepsilon \cap \widetilde{H}_\varepsilon } \, | \, \cF_{U^c}] \\
& + \bbE[f_2(\widetilde{\varphi}+\xi - \widetilde{\xi})\mathbbm{1}_{H_\varepsilon \cap \widetilde{H}_\varepsilon^c } \, | \, \cF_{U^c}].
\end{split}
\end{equation}
The second summand in the previous equation can be bounded as
\begin{equation}
\label{eq:BoundSecondTerm}
0 \leq \bbE[f_2(\widetilde{\varphi}+\xi - \widetilde{\xi})\mathbbm{1}_{H_\varepsilon \cap \widetilde{H}_\varepsilon^c } \, | \, \cF_{U^c}] \leq \bbP[\widetilde{H}^c_\varepsilon] \mathbbm{1}_{H_\varepsilon}.
\end{equation}
On the other hand, we have
\begin{equation}
\label{eq:UpperBoundFirstTerm}
\begin{split}
\bbE[f_2(\widetilde{\varphi}+\xi - \widetilde{\xi})\mathbbm{1}_{H_\varepsilon \cap \widetilde{H}_\varepsilon }  | \cF_{U^c}] & \leq \bbE[f_2(\widetilde{\varphi}+\varepsilon)\mathbbm{1}_{H_\varepsilon \cap \widetilde{H}_\varepsilon } \, | \, \cF_{U^c}]  \\
& \leq \bbE[f_2(\widetilde{\varphi}+\varepsilon)] \mathbbm{1}_{H_\varepsilon},
\end{split}
\end{equation}
using the fact that $f_2$ is increasing, $\sup_{x\in K}|\xi_x - \widetilde{\xi}_x| \leq \varepsilon$ on $H_\varepsilon \cap \widetilde{H}_\varepsilon$ and $\widetilde{\varphi}$ is independent of $\cF_{U^c}$. Similarly, one has
\begin{equation}
\label{eq:LowerBoundFirstTerm}
\begin{split}
\bbE[f_2(\widetilde{\varphi}+\xi - \widetilde{\xi})\mathbbm{1}_{H_\varepsilon \cap \widetilde{H}_\varepsilon }  \, | \, \cF_{U^c}] \geq \bbE[f_2(\widetilde{\varphi} -\varepsilon )] \mathbbm{1}_{H_\varepsilon} - \bbP[H^c_\varepsilon] \mathbbm{1}_{H_\varepsilon}.
\end{split}
\end{equation}
The conditional decoupling inequality~\eqref{eq:ConditionalDecoupling} follows upon combining~\eqref{eq:BoundSecondTerm},~\eqref{eq:UpperBoundFirstTerm} and~\eqref{eq:LowerBoundFirstTerm}. 

Now note that since $\bbE[f_1(\varphi) \mathbbm{1}_{H_\varepsilon}] \in \big[\bbE[f_1(\varphi)] - \bbP[H_\varepsilon^c], \bbE[f_1(\varphi)] \big]$, one can multiply~\eqref{eq:ConditionalDecoupling} on both sides by $f_1(\varphi)$ (which is $\cF_{U^c}$-measurable) and take the expectation to obtain~\eqref{eq:DecouplingGeneral}.
\end{proof}

As mentioned before, the use of the bounds in the previous Theorem~\ref{lem:DecouplingGeneral} depends on the quality of the control on $\mathbb{P}[H_\varepsilon^c]$. Throughout the remainder of this section, we fix
\begin{equation}
r > 0,
\end{equation}
and we consider two disjoint sets for $N \in \mathbb{N}$,
\begin{equation}
\label{eq:SetupK1K2}
\begin{split}
K_1 \subseteq \bbZ^d, \, & \text{and } K_2 = B(x_2,N), \, \text{with }x_2 \in \bbZ^d \text{ such that } \\
& d_\infty(K_1,K_2) > rN, \, \text{and } r N \geq 10. 
\end{split}
\end{equation}
The quantity $rN$ is a common lower bound for the distance of the two sets $K_1$ and $K_2$ under consideration. We then restrict our attention in Theorem~\ref{lem:DecouplingGeneral} to the case where $K = K_2$ and $U = B(x_2,(1 + r)N) $. Note that since $d_\infty(K_1,K_2) > rN $ and $r > 0$, one has $K_1 \subseteq B(x_2,(1 + r)N)^c (= U)$. The geometric set-up is sketched in Figure~\ref{fig:picturedistances}. We already anticipate at this point that in Section~\ref{sec:Subcritical} we will be working with small $r = r(N)$ slowly tending to zero (but such that $r(N)N$ is still tending to infinity), whereas in Section~\ref{sec:Supercritical_Phase} we will consider $r \geq 1$ large. \medskip

\begin{figure}[htbp]
\begin{center}
\includegraphics[scale=.7]{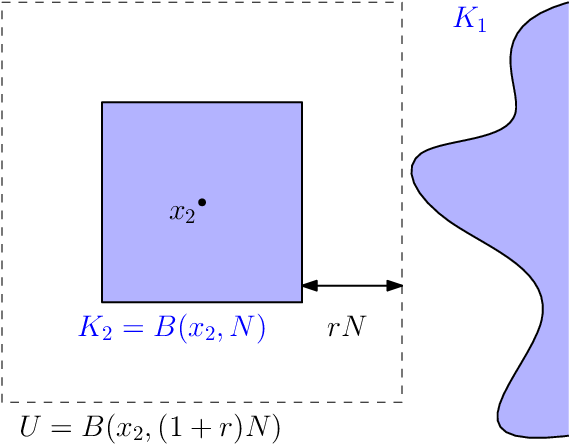}
\end{center}
\caption{An illustration of the set-up in which a bound on the probability of $H_\varepsilon^c$ can be obtained.}
\label{fig:picturedistances}
\end{figure}

In the next lemma, we provide a bound on the error term in~\eqref{eq:DecouplingGeneral}, relying on the precise control of the variance of $\xi$ in Lemma~\ref{lem:BulkVarBound}.
\begin{lemma}
\label{lem:LemmaVarBound}
For every $\varepsilon > 0$ one has
\begin{equation}
\label{eq:LemmaVarBoundClaim}
\bbP[H^c_\varepsilon] \leq 2(2N+1)^d \exp\left(-c_3  \cdot (r N)^{d-4} \varepsilon^2 \right).
\end{equation}
\end{lemma}
\begin{proof}
Letting $N'$ and $\delta>0$ be such that $N' = (1+r) N$ and $\delta N' = N$, it holds that $(1-\delta)N' = rN\geq 10$ and thus we can apply Lemma~\ref{lem:BulkVarBound}.
Since $|K_2| = (2N+1)^d$, we can use a union bound and infer that
\begin{equation}
\bbP[H^c_\varepsilon] \leq (2N+1)^d \sup_{x \in K_2} \bbP[|\xi_x| > \tfrac{\varepsilon}{2}] \leq 2 (2N+1)^d \sup_{x \in K_2} \bbP[\xi_x > \tfrac{\varepsilon}{2}].
\end{equation}
The claim now follows by Lemma~\ref{lem:BulkVarBound}, combined with the standard Gaussian tail bound 
\begin{equation}
\bbP[Y > t] \leq e^{- \frac{t^2}{2\sigma^2}}, \qquad \text{ for $Y \sim \mathcal{N}(0,\sigma^2)$, $t,\sigma^2 > 0$},
\end{equation}
with $\sigma^2 = \textnormal{Var}[\xi_x^{B(x_2,N')}] \leq \frac{c}{((1-\delta)N')^{d-4}} = \frac{c}{(rN)^{d-4}}$, uniformly in $x \in B(x_2,N)$ and $t = \frac{\varepsilon}{2}$.
\end{proof}

We now obtain as a combination of the decoupling inequality and the variance bound from Lemma~\ref{lem:LemmaVarBound} the following.

\begin{corollary}
\label{thm:Decoup}
For $i \in \{1,2\}$, let $A_i \in \sigma(\Psi_y \, : \, y \in K_i)$ with $A_2$ an increasing event and let $B_i \in \sigma(\Psi_y \, : \, y \in K_i)$ with $B_2$ a decreasing event. Then, for every $\varepsilon > 0$ we have that
\begin{equation}
\label{eq:DecouplingMainThmClaim}
\begin{split}
\bbP_{h}[A_1 \cap A_2] & \leq \bbP_{h}[A_1] \bbP_{h-\varepsilon}[A_2] + C N^d \exp(-c_3 \cdot \varepsilon^2 (r N)^{d-4}), \\
\bbP_{h }[B_1 \cap B_2] & \leq \bbP_{h }[B_1] \bbP_{h + \varepsilon}[B_2] + C N^d \exp(-c_3 \cdot \varepsilon^2 (r N)^{d-4}).
\end{split}
\end{equation}
\end{corollary}

We stress that the constants $C$ and $c_3$ in~\eqref{eq:DecouplingMainThmClaim} do \textit{not} depend on $r$. 

\begin{proof}[Proof of Corollary~\ref{thm:Decoup}]
For any event $F \in \cG$, we define the event $F^h \in \cF$ by setting
\begin{equation}
F^h = \left\{ \phi \in \bbR^{\bbZ^d} \, : \, (\mathbbm{1}_{ \{ \phi_x \geq h \} })_{x \in \bbZ^d} \in F \right\} \in \cF.
\end{equation}
Now the function $f_2 = \mathbbm{1}_{A_2^h} : \bbR^{\bbZ^d} \rightarrow \bbR$ is increasing, and $f_i$ is supported on $K_i$ for $i = 1,2$. We can apply Theorem~\ref{lem:DecouplingGeneral} and we have that
\begin{equation}
\begin{split}
\bbP_h[A_1 \cap A_2] & = \bbE[\mathbbm{1}_{A_1^h}(\varphi) \cdot \mathbbm{1}_{A_2^h}(\varphi) ] \leq \bbE[\mathbbm{1}_{A_1^h}(\varphi)]\bbE[ \mathbbm{1}_{A_2^h}(\varphi+\varepsilon) ] + 2\bbP[H_\varepsilon^c] \\
& \stackrel{\eqref{eq:LemmaVarBoundClaim}}{\leq} \bbE[\mathbbm{1}_{A_1^{h}}(\varphi)]\bbE[ \mathbbm{1}_{A_2^{h-\varepsilon}}(\varphi) ] + 4(2N+1)^d \exp\left(-c  \cdot (r N)^{d-4} \varepsilon^2 \right),
\end{split} 
\end{equation}
which proves the first inequality of~\eqref{eq:DecouplingMainThmClaim}. The second part follows in a similar manner.
\end{proof}

\begin{remark}
\label{rem:Sec3Closing}

1) 
 Note that for $h,h' \in \bbR$, $h < h'$ and $E \in \mathcal{G}$ increasing, one has $\mathbb{P}_{h'}[E] \leq \mathbb{P}_{h}[E]$. In particular, if \textit{both} $A_1$ and $A_2$ in the statement of Corollary~\ref{thm:Decoup} are increasing events, one has 
\begin{equation}
\label{eq:WeakDecoupBothIncreasing}
\bbP_{h}[A_1 \cap A_2]  \leq \bbP_{h-\varepsilon}[A_1] \bbP_{h-\varepsilon}[A_2] + C N^d \exp(-c_3 \cdot \varepsilon^2 (r N)^{d-4}),
\end{equation}
similarly for two decreasing events $B_1$ and $B_2$. \medskip

2) Although the conditional decoupling inequality~\eqref{eq:ConditionalDecoupling} holds for general Gaussian fields, it becomes useful only when a good control on $\bbP[H_\varepsilon^c]$, and ultimately on $\mathrm{Var}[\xi_x^U]$, for $x\in K$, is available. Another interesting model for which it is possible to derive such bounds is the fractional field on $\bbZ^d$, $d\geq 1$, as considered in~\cite{bolthausen1995entropic,chiarini2016extremes}, for which the covariances $G_\alpha(\cdot,\cdot)$ are characterized by the Green function of an isotropic $\alpha$-stable random walk, with $\alpha\in (0,2\wedge d)$. As such 
\begin{equation}
\xi_x^U =  \sum_{y\notin U} P^\alpha_x[X_{H_{U^c}} = y] \,\varphi_y 
\end{equation}
where $P^\alpha_x$ is the law of an $\alpha$-stable random walk started at $x \in K$. In particular
\begin{equation}
  \mathrm{Var}[\xi_x^U] = \sum_{y\notin U} P^\alpha_x[X_{H_{U^c}} = y] G_\alpha(x,y)  \leq \sup_{y\notin U} G_\alpha(x,y)\leq C d_\infty(K,U^c)^{\alpha-d}.
\end{equation}
With computations similar to Lemma~\ref{lem:LemmaVarBound}, one then obtains the bound 
\begin{equation}
  \bbP[H_\varepsilon^c] \leq 2|K| \exp\Big(-c\cdot d_{\infty}(K,U^c)^{d-\alpha}\Big). 
\end{equation}
In conjuction with the arguments presented in Sections~\ref{sec:Subcritical} and~\ref{sec:Supercritical_Phase}, it is then possible to show the existence of a phase transition of the level-set percolation model associated with such fields when $\alpha \in (0, 2 \wedge d)$, $d\geq 2$.

3) With the domain Markov property for the membrane model we showed in Theorem~\ref{lem:DecouplingGeneral} a \emph{conditional} decoupling inequality (cf.\ ~\eqref{eq:ConditionalDecoupling}) which is of its own interest. After integration this lead to a proof of Corollary~\ref{thm:Decoup}. We mention that although we will rely on Corollary~\ref{thm:Decoup} for the next two sections, Theorem~\ref{lem:DecouplingGeneral} is a stronger result that might prove to be useful in the future. This is the case in the related model of random interlacements,  where a conditional decoupling inequality akin to Theorem~\ref{lem:DecouplingGeneral} is proved in~\cite{alves2018conditional} and used as pivotal ingredient in~\cite{fribergh2018biased} to study the biased random walk on the interlacement set.

An alternative way to show a decoupling inequality for increasing events (such as in~\eqref{eq:WeakDecoupBothIncreasing}) is by utilizing a finite range decomposition of the membrane model akin to that in~\cite{duminil2020equality} for the Gaussian free field. In~\cite{duminil2020equality}, a finite range decomposition was pivotal to prove sharpness of the phase transition. Define for $\ell,L\in \bbN$
\begin{equation}
  \phi^\ell_x \stackrel{\text{def}}{=} \sum_{y\,:\, |y-x|_\infty = \ell} \Gamma(x,y) \xi_{y},\qquad \varphi^L_x \stackrel{\text{def}}{=} \sum_{\ell = 0}^L \phi^\ell_x,\qquad x\in \bbZ^d,
\end{equation}
where $(\xi_y)_{y\in \bbZ^d}$ is a family of i.i.d.\ standard Gaussian random variables.  The series 
\begin{equation}
  \overline{\varphi}_x \stackrel{\text{def}}{=} \sum_{\ell= 0}^\infty \phi^\ell_x,\qquad x\in \bbZ^d
\end{equation} 
converges in $L^2(\bbP)$ and $\bbP$-a.s.\ and has the same distribution of the membrane model. 

Remarkably, $\varphi_x^L$ and $\varphi_y^L$ are uncorrelated as soon as $|x-y|_\infty>2L$, and for any $K\subset\subset \bbZ^d$, $\varepsilon>0$, one has that 
\begin{equation}
  \bbP\Big[\max_{x\in K} |\overline{\varphi}_x - \varphi^L_x|>\varepsilon\Big] \leq |K| \exp(-c\varepsilon^2L^{d-4}),
\end{equation}
which follows from a simple Gaussian bound in view of $\mathrm{Var}[\overline{\varphi}_x - \varphi^L_x] \leq \sum_{\ell> L} c \ell^{3-d} \leq c' L^{4-d}$.

By leveraging the finite range of dependence of  $(\varphi^L_x)_{x\in \bbZ^d}$, one can then show that for any two increasing events $A_i \in \sigma(\Psi_y\,:\,y\in K_i)$, $i=1,2$ such that $d_\infty(K_1,K_2)>2L$ and any $\varepsilon>0$ 
\begin{equation}
  \bbP_{h}[A_1 \cap A_2]  \leq \bbP_{h-\varepsilon}[A_1] \bbP_{h-\varepsilon}[A_2] + C |K_1\cup K_2| \exp(-c \cdot \varepsilon^2 L^{d-4}).
\end{equation}

\end{remark}

\section{The subcritical phase}

\label{sec:Subcritical}

In this section, we establish the existence of a (strongly) non-percolative phase for $E^{\geq h}$. Recall the definition of $h_{\ast\ast}(d)$ in~\eqref{eq:h_astast_Def}. In the following Theorem, we prove that this parameter bounds $h_\ast(d)$ from above and is strictly below $+\infty$, which implies that there is a phase in which the point-to-point connection probability in $E^{\geq h}$ admits a stretched exponential decay. Recall that for $x,y\in \bbZ^d$, the event $\{x \stackrel{\geq h}{\longleftrightarrow} y\}$  refers to $x$ and $y$ being in a common connected component of $E^{\geq h}$. 

\begin{theorem}
\label{thm:SubcriticalPhase}
One has
\begin{equation}
\label{eq:h_astast_finite}
h_\ast(d) \leq h_{\ast\ast}(d) < +\infty, \qquad \text{for all } d\geq 5.
\end{equation}
For $d \geq 6$ and $h > h_{\ast\ast}(d)$, one has that
\begin{equation}
\label{eq:ExpDecayStatement}
\bbP\Big[ 0 \stackrel{ \geq h }{\longleftrightarrow} x \Big] \leq c_4(h) e^{- c_5(h) |x|}, \qquad \text{for }x \in \bbZ^d.
\end{equation}
For $d = 5$, $h > h_{\ast\ast}(5)$ and $b > 1$, one has
\begin{equation}
\label{eq:StretchedExpDecay}
\bbP\Big[ 0 \stackrel{ \geq h }{\longleftrightarrow} x \Big] \leq c_6(h,b) e^{- c_7(h,b)\frac{|x|}{\log^{3b}|x|} }, \qquad \text{for }x \in \bbZ^d.
\end{equation}
\end{theorem}

\begin{remark}
1) The proof of the exponential decay,~\eqref{eq:ExpDecayStatement}, resp.~exponential decay with logarithmic correction,~\eqref{eq:StretchedExpDecay}, in Theorem~\ref{thm:SubcriticalPhase}, essentially follows the argument in~\cite{popov2015soft}, which dealt with a similar statement for the vacant set of random interlacements in $d \geq 3$, see also~\cite[Section 2]{popov2015decoupling} for the level-sets of the GFF in $d \geq 3$, and relies on the kind of decoupling inequalities established in the previous section. For~\eqref{eq:h_astast_finite}, we provide a proof using the same decoupling techniques, together with the Borell-TIS inequality, which somewhat simplifies the proof structure of~\cite{rodriguez2013phase} which established the equivalent of~\eqref{eq:h_astast_finite} for level-sets of the GFF. \medskip

2) A more precise understanding of the logarithmic correction in $d = 3$ for the probability to connect $0$ to $\partial B(0,N)$ in the upper level-set of the GFF has been obtained for $h > h^{\mathrm{GFF}}_{\ast} ( = h^{\mathrm{GFF}}_{\ast\ast})$ in the recent work~\cite{goswami2021radius}, see Theorem 1.1 in this reference. In fact, they show that this probability decays as $\exp\left(-\frac{\pi}{6}(h-h^{\mathrm{GFF}}_\ast)^2 \frac{N}{\log N} \right)$ as $N$ tends to infinity. One may naturally wonder whether such a behavior also occurs for the membrane model in $d = 5$.
\end{remark}

\begin{proof}[Proof of Theorem~\ref{thm:SubcriticalPhase}]
We begin with the proof of~\eqref{eq:h_astast_finite}. It is straightforward to see that for every $h \in \mathbb{R}$,
\begin{equation}
\eta(h) = \bbP\Big[ 0 \stackrel{ \geq h }{\longleftrightarrow} \infty \Big] \leq \bbP\left[ B(0,L) \stackrel{\geq h}{\longleftrightarrow} \partial B(0,2L) \right],
 \end{equation}
 and therefore by inspection of the definitions~\eqref{eq:h_ast_Def} and~\eqref{eq:h_astast_Def}, the inequality $h_{\ast}(d) \leq h_{\ast\ast}(d)$ is immediate.

We will now prove the finiteness of $h_{\ast\ast}(d)$. For this, we need to introduce some notation, adapted from~\cite[Section 7]{popov2015soft} with some modifications.

Define for an integer $L_1 \geq 100$ chosen later the sequence
\begin{equation}
L_{k+1} = \ell_0L_k = \ell_0^{k} L_1, \qquad k \geq 1,
\end{equation}
where $\ell_0 \in (2,3]$. This choice of the sequence is more convenient to prove~\eqref{eq:h_astast_finite}, whereas we rely on the sequence~\eqref{eq:SequenceLTilde} below (as in~\cite[Section 7]{popov2015soft}) to prove~\eqref{eq:ExpDecayStatement} and~\eqref{eq:StretchedExpDecay}. Note that $L_k$ need not be an integer in general. We consider boxes that will enter a renormalization scheme, namely for $x \in \bbZ^d$ and $k \geq 1$, we set
\begin{equation}
\label{eq:BoxesDef}
C_x^k = [0,L_k)^d \cap \bbZ^d + x, \qquad D_x^k = [-L_k,2L_k) \cap \bbZ^d + x.
\end{equation}
We then consider connection-type events of the form 
\begin{equation}
\label{eq:ConnectionTypeEventDef}
A_x^k(\alpha) = \{C^k_x \stackrel{\geq \alpha }{\longleftrightarrow} \bbZ^d \setminus D_x^k \}, \qquad \alpha \in \bbR,
\end{equation}
which stand for the existence of a nearest neighbor path in $E^{\geq\alpha}$ starting in $C_k^x$ and ending in $\bbZ^d \setminus D_x^k$. We are interested in the decay (as $k$ increases) of 
\begin{equation}
\label{eq:RecursionQuantity_pk}
p_k(h) = \bbP\left[A^k_0(h) \right].
\end{equation}
To this aim, we first observe that the events $A_x^k(\alpha)$ are increasing for every $x \in \mathbb{Z}^d$, $k\geq 1$, and $\alpha \in \mathbb{R}$. Moreover, there exist two collections of points $\{x_i^k\}_{i = 1}^{3^d} \subseteq \bbZ^d$ and $\{y_j^k\}_{j = 1}^{2d 7^{d-1}} \subseteq \bbZ^d$ with 
\begin{itemize}
\item[(i)]  $C_0^{k+1}$ consists of the union of $(C^k_{x_i^k} \, : \, i = 1,..., 3^d)$, and
\item[(ii)] the union of all $(C^k_{y_j^k} \, : \, j = 1,...,2d7^{d-1})$ is disjoint from $D_0^{k+1}$ and contains $\partial (\bbZ^d \setminus D_0^{k+1})$,
\end{itemize}
such that the following recursion relation holds
\begin{equation}
\label{eq:RecursionEvents}
A_0^{k+1}(\alpha) \subseteq \bigcup_{i \leq 3^d,\, j \leq 2d7^{d-1}} A^k_{x_i^k}(\alpha) \cap A^k_{y_j^k}(\alpha),
\end{equation}
(7.7) and (7.8) of~\cite{popov2015soft} (here we use that $\ell_0 \in (2,3]$). Note at this point that the cardinality of the index set in the union in~\eqref{eq:RecursionEvents} is bounded from above by 
\begin{equation}
\gamma_d = \frac{2d \cdot 21^d}{7}. 
\end{equation}
Now fix $\widehat{h} > 1$ to be chosen later. For $\varepsilon > 0$ small, define
\begin{equation}
\label{eq:hkRecursiveDef}
h_k = \frac{\widehat{h}}{\prod_{j = 1}^{k-1} ( 1 - \varepsilon j^{-2})}, \qquad k \in \bbN,
\end{equation}
where the empty product is equal to $1$ and 
\begin{equation}
\label{eq:InfiniteProdDef}
\cP = \prod_{j = 1}^\infty \left( 1- \frac{\varepsilon}{j^2}\right) \in (0,1).
\end{equation}
The sequence $(h_k)_{k \geq 1}$ is then increasing with $h_k \geq h_1 = \widehat{h} ( > 1)$ and 
\begin{equation}
h_\infty = \lim_{k \rightarrow \infty} h_k = \frac{\widehat{h}}{\cP} < \infty.
\end{equation}
We now obtain a recursion for $p_k(h_k)$. To this end, we apply a union bound to~\eqref{eq:RecursionEvents} and utilize the decoupling inequality~\eqref{eq:DecouplingMainThmClaim} with $N = \lfloor \frac{3}{2}L_k \rfloor +1$, some $r > 0$ depending on $\ell_0$, $\varepsilon$ replaced by $\frac{\varepsilon}{k^2} h_{k+1} (\geq \frac{\varepsilon}{k^2})$ (recall that $h_k = (1-\varepsilon k^{-2})h_{k+1}$ by~\eqref{eq:hkRecursiveDef}), and $L_1$ large enough. With the definition~\eqref{eq:RecursionQuantity_pk} this yields
\begin{equation}
\begin{split}
\label{eq:DecouplingApplicationInduction}
p_{k+1}(h_{k+1}) & \leq \gamma_d p_k^2(h_k) + c_8 \ell_0^{kd} L_1^d \exp\left(- \frac{c_9(\ell_0,\varepsilon)}{k^{4}} (L_1 \ell_0^k)^{d-4}  \right), 
 \qquad k \geq 1.
 \end{split}
\end{equation}
 Let $\cA > 0$ such  that $e^{-\cA} < \gamma_d^{-1}$.  We choose $L_1 \geq 100$ large enough (depending on $\ell_0$ and $\varepsilon$) such that for every $k \geq 1$, one has
\begin{equation}
\label{eq:SmallnessOfError}
c_8 \ell_0^{kd} L_1^d \exp\left( - \frac{c_9(\ell_0,\varepsilon)}{k^{4}} (L_1 \ell_0^k)^{d-4} + \cA + 2^{k+1} \right) < 1 - \gamma_d e^{-\cA},
\end{equation}
since $\ell_0 > 2$. For this choice of $L_1$, let $\widehat{h}$ be large enough such that 
\begin{equation}
\label{eq:BoundOn_p1}
p_1(\widehat{h}) \leq \bbP\Big[ \max_{D_0^1} \varphi \geq \widehat{h} \Big]  < \exp(-\cA).
\end{equation}
For $\mathcal{B} \in (0,1)$ small enough, one has
\begin{equation}
\label{eq:InductionBaseCase_pk}
p_1(\widehat{h}) \leq \exp(- \cA - 2\cB).
\end{equation}
Similarly as in~\cite{popov2015soft}, one can show by induction that
\begin{equation}
p_k(h_k) \leq \exp(-\cA - \cB 2^k), \qquad k \geq 1.
\end{equation}
Indeed, the base case follows from~\eqref{eq:InductionBaseCase_pk} since $h_1 = \widehat{h}$ (recall~\eqref{eq:hkRecursiveDef}), and using~\eqref{eq:DecouplingApplicationInduction} and the induction hypothesis yields
\begin{equation}
\frac{p_{k+1}(h_{k+1})}{\exp(-\cA - \cB2^{k+1})} \leq \gamma_d e^{-\cA} + c_8 \ell_0^{kd} L_1^d \exp\left(-\frac{c_9(\ell_0,\varepsilon)}{k^{4}} (L_1 \ell_0^k)^{d-4} + \cA + \cB 2^{k+1} \right) \stackrel{\eqref{eq:SmallnessOfError}}{<} 1.
\end{equation}
In total, we have established that for every $k \geq 1$,
\begin{equation}
\label{eq:BasicConnectionUpperbound}
\bbP\Big[C^k_0 \stackrel{\geq h_\infty }{\longleftrightarrow} \bbZ^d \setminus D_0^k \Big] \stackrel{h_\infty > h_k}{\leq} p_k(h_k) \leq C e^{-c' 2^k}.
\end{equation}

Now set $\varrho = \frac{\log(2)}{\log(\ell_0)}$, so that $2^k = \ell_0^{k\varrho} = (L_{k+1} / L_1)^{\varrho}$.  Suppose that for $L \geq 1$, there exists $k \geq 1$ with $2 L_k \leq L < 2 L_{k+1}$. Then we find that
\begin{equation}
\label{eq:ExpDecayReasoningAllL}
\bbP\left[ B(0,L) \stackrel{\geq h_\infty}{\longleftrightarrow} \partial B(0,2L) \right] \leq \bbP\bigg[\bigcup_{x \in L_k \bbZ^d \, : \, C_x^k \cap \partial B(0,L) \neq \varnothing} \big\{ C^k_x \stackrel{\geq h_\infty }{\longleftrightarrow} \bbZ^d \setminus D_x^k  \big\}  \bigg],
\end{equation}
and using a union bound together with translation invariance of $\bbP$, we immediately see that 
\begin{equation}
\bbP\left[ B(0,L) \stackrel{\geq h_\infty}{\longleftrightarrow} \partial B(0,2L) \right] \leq C(\ell_0) e^{-c(\ell_0) L^\varrho},
\end{equation}
(and by adjusting $C$ and $c$, this is also true for $L < 2 L_1$). It follows that $h_{\ast\ast}(d) \leq h_\infty(d) < \infty$, finishing the proof of~\eqref{eq:h_astast_finite}.

We now turn to the proof of~\eqref{eq:ExpDecayStatement} and~\eqref{eq:StretchedExpDecay}, which comes as an adaptation of the proof of~\cite[Theorem 3.1]{popov2015soft}, and is similar to the proof of~\eqref{eq:h_astast_finite}. 

Suppose $h > h_{\ast\ast}$ and define $\widetilde{h} = h_{\ast\ast} \vee (h - 1)$ and $\widehat{h} = \frac{h-\widetilde{h}}{2} (> 0)$. Then set $b \in (1,2]$ and choose $\varepsilon$ such that 
\begin{equation}
\mathcal{P}_b = \prod_{j = 1}^\infty \left( 1- \frac{\varepsilon}{j^b}\right) > \frac{1}{2}
\end{equation}
(note that $\mathcal{P}_2$ coincides with $\mathcal{P}$ from~\eqref{eq:InfiniteProdDef}) and consider the levels 
\begin{equation}
h'_k =\widetilde{h} + \frac{\widehat{h}}{\prod_{j = 1}^{k-1} ( 1 - \varepsilon j^{-b})}, \qquad k \in \bbN.
\end{equation} 
The sequence $(h_k')_{k \geq 1}$ is increasing with $h'_1 = \frac{\widetilde{h}+h}{2} \leq h'_2 \leq ... \leq h'_\infty = \widetilde{h} + \frac{\widehat{h}}{\cP_b} < h$, and moreover 
\begin{equation}h_{k+1}' - h_k' = \frac{\widehat{h}}{\prod_{j = 1}^{k} (1 - \varepsilon j^{-b}) } \varepsilon k^{-b}.
\end{equation} 
One can then define 
\begin{equation}
\label{eq:SequenceLTilde}
\widetilde{L}_{k+1} = 2\left( 1 + \frac{1}{(k+5)^b}\right)\widetilde{L}_k, \qquad k \geq 1
\end{equation}
with an integer $\widetilde{L}_1 \geq 100$ chosen later and consider $\widetilde{C}_x^k$, $\widetilde{D}_x^k$, $\widetilde{A}^k_x(\alpha)$ and $\widetilde{p}_k(h)$, which are defined like $C_x^k$, $D_x^k$, $A^k_x(\alpha)$ and $p_k(h)$ as in~\eqref{eq:BoxesDef},~\eqref{eq:ConnectionTypeEventDef} and~\eqref{eq:RecursionQuantity_pk} but with $\widetilde{L}_k$ in place of $L_k$. This corresponds to the set-up of~\cite[Section 7]{popov2015soft}, and we obtain a recursion for $\widetilde{p}_k(h_k')$ by applying the decoupling inequality~\eqref{eq:DecouplingMainThmClaim} with $N = \lfloor \frac{3}{2}\widetilde{L}_k\rfloor + 1 $, $r > 0$ such that $rN \geq c\frac{2^{k-1} \widetilde{L}_1}{(k+5)^b}$ ($\geq 10$ for $\widetilde{L}_1$ large enough), and $\varepsilon$ replaced by $\frac{\varepsilon}{k^b}\frac{\widehat{h}}{\prod_{j = 1}^{k} (1 - \varepsilon j^{-b}) } (\geq c(h) \frac{\varepsilon}{k^b})$, which yields
\begin{equation}
\begin{split}
\widetilde{p}_{k+1}(h_{k+1}') & \leq \gamma_d \widetilde{p}_k^2(h_k') + c_{10}  \widetilde{L}_1^d \exp\left(- \frac{c_{11}(\varepsilon)}{k^{2b}} \left( \frac{\widetilde{L}_1 2^k}{(k+5)^b}  \right)^{d-4}  \right), 
 \qquad k \geq 1.
 \end{split}
\end{equation}
 By repeating the arguments in~\cite[(7.12)--(7.14)]{popov2015soft} for $d \geq 6$ resp.~\cite[(7.16)--(7.18)]{popov2015soft} for $d = 5$, one can show that for all $x \in \bbZ^d$:
\begin{equation}
\bbP\Big[ 0 \stackrel{ \geq h }{\longleftrightarrow} x \Big] \leq \widetilde{p}_k(h'_\infty) \leq \widetilde{p}_k(h'_k) \leq \begin{cases} C e^{-c' \cdot 2^k},  & d \geq 6, \\
C e^{-c' \cdot \frac{2^k}{(k+5)^{3b}}}, & d = 5,
\end{cases}
\end{equation}
where $k = \max\{ m \, : \, \frac{3}{2}\widetilde{L}_m < | x| \}$, from which~\eqref{eq:ExpDecayStatement} and~\eqref{eq:StretchedExpDecay} follow.
\end{proof}

\section{The supercritical phase}

\label{sec:Supercritical_Phase}

The aim of this section is to define the critical level $\overline{h}$, below which the level-set $E^{\geq h}$ is in a strongly percolative regime, and to demonstrate that it is strictly above $-\infty$. Together with the main result Theorem~\ref{thm:SubcriticalPhase} of the previous section, this shows that $E^{\geq h}$ indeed undergoes a non-trivial percolation phase transition, and gives a reasonably complete description of the geometry of its strongly supercritical phase ($h < \overline{h}$).

Our description of the strongly supercritical phase for $E^{\geq h}$ requires the verification of certain general assumptions for correlated percolation models on $\bbZ^d$, called~\ref{inv_ergodic} -- \ref{decorrelation} and ~\ref{LocUniq} -- \ref{Continuity}, which were introduced in~\cite{drewitz2014chemical} and applied there (and in~\cite{procaccia2016quenched,sapozhnikov2017random}) for the level-set of the GFF, random interlacements and the vacant set of random interlacements in dimensions $d \geq 3$. In the remainder of the section, we first recall these general assumptions, then we give a definition of $\overline{h}$ and prove its finiteness as the first part of the main Theorem~\ref{thm:SupercriticalPhase}. Finally, we show in the second part of the same theorem that for $h < \overline{h}$, chemical distances in $E^{\geq h}$ are comparable to the Euclidean distances with high probability, large metric balls fulfill a shape theorem, and that a quenched invariance principle holds for the random walk on the ($\bbP$-a.s.~unique) infinite connected component in $E^{\geq h}$.

Let us introduce some more notation that will be used in this section. Consider $E^{\geq h}$ as a graph with edges between nearest neighbors $x,y \in E^{\geq h}$, $x \sim y$ and let $\rho(\cdot,\cdot)$ denote the corresponding graph distance, with $\rho(x,y) = \infty$ if $x,y$ are in different connected components of $E^{\geq h}$. We also define the (closed) ball in $E^{\geq h}$ with center $x \in \bbZ^d $ and radius $r \geq 0$ with respect to $\rho(\cdot,\cdot)$ by
\begin{equation}
B_{E^{\geq h}}(x,r) = \{ y \in E^{\geq h} \, : \, \rho(x,y) \leq r\}.
\end{equation}
For $r \in [0,\infty]$, we also let $E^{\geq h}_r$ stand for the random set of sites in $E^{\geq h}$ which are in connected components of $\ell^1$-diameter at least $r$ (note that $E_0^{\geq h} = E^{\geq h}$, and $E^{\geq h}_\infty$ stands for the infinite connected component of $E^{\geq h}$, which is $\bbP$-a.s.~unique for $h < h_\ast$). 

We now introduce the critical value $\overline{h}$. Given  $h \in \bbR$, we say that (the upper level set of) $\varphi$ \textit{strongly percolates} at level $h$ if there exists $\Delta(h) > 0$ such that for every $L \geq 1$, one has
\begin{equation}
\label{eq:NoLargcomp}
\bbP\left[E^{\geq h}_{L} \cap B(0,L) = \varnothing \right] \leq \exp\big( - (\log L)^{1 + \Delta(h)} \big),
\end{equation}
and 
\begin{equation}
\label{eq:LargeCompNotConnected}
\bbP\left[\begin{minipage}{0.45\textwidth} 
  there are components in $E^{\geq h}_{L/10} \cap B(0,L)$ that are not connected in $E^{\geq h} \cap B(0,2L)$ \end{minipage}\right] \leq  \exp\big( - (\log L)^{1 + \Delta(h)} \big).
\end{equation}

With this we set
\begin{equation}
\overline{h} = \sup\{ h \in \bbR \, : \, \text{$\varphi$ strongly percolates at every level $h'$ with $ h' < h$}\}
\end{equation}
(with the convention that $\sup \varnothing = - \infty$).

This definition is similar to the one for the case of the GFF and the vacant set of random random interlacements in~\cite{drewitz2014chemical}, the $\nabla \varphi$-model in~\cite{rodriguez2016decoupling} or the vacant set of the random walk loop soup in~\cite{alves2019decoupling}, and is essentially chosen in such a way that the law of $E^{\geq h}$ on $\{0,1\}^{\bbZ^d}$ satisfies the condition~\ref{LocUniq} below when $h < \overline{h}$.

We now state the conditions~\ref{inv_ergodic} -- \ref{decorrelation} and ~\ref{LocUniq} -- \ref{Continuity} from~\cite{drewitz2014chemical} along with the relevant set-up. To this end, consider a family $(\bbQ_u)_{u \in (a,\infty)}$ of probability measures on $(\{ 0,1\}^{\bbZ^d}, \cG)$, where $\cG = \sigma( \Psi_x \, : \, x \in \bbZ^d)$ (recall the definition of $\Psi_x$ above~\eqref{eq:IncreasingDecreasing}), and $ a > 0$ is fixed. We also introduce the set
\begin{equation}
\mathcal{S} = \mathcal{S}(\zeta) = \{x \in \bbZ^d \, : \, \Psi_x(\zeta) = 1 \} \subseteq \bbZ^d, \qquad \text{for } \zeta \in \{0,1 \}^{\bbZ^d}.
\end{equation}
Recall that $(\theta_z)_{z \in \bbZ^d}$ denotes the group of lattice shifts (see~\eqref{eq:Lattice_shift_def}), which we may view by slight abuse of notation as acting on $\{0,1\}^{\bbZ^d}$. 

\begin{enumerate}[label=\textbf{P\arabic*}]
\item \label{inv_ergodic} For any $u \in (a,\infty)$, $\bbQ_u$ is invariant and ergodic with respect to $(\theta_z)_{z \in \bbZ^d}$. \\

\item \label{monotonicity} For any $u, u' \in (a,\infty)$ with $u < u'$ and any increasing event $A \in \cG$, one has $\bbQ_u[A] \leq \bbQ_{u'}[A]$ (stochastic monotonicity). \\
\end{enumerate}

The following condition is the weak decorrelation inequality for monotone events, which is where the decoupling inequality of Section~\ref{sec:Decoupling_inequality} enters the proof. 

\begin{enumerate}[label=\textbf{P3}]
\item \label{decorrelation} Consider $A_i \in \sigma(\Psi_y \, : \, y \in B(x_i,10L))$ increasing events and decreasing events $B_i \in \sigma(\Psi_y \, : \, y \in B(x_i,10L))$  for $i \in \{1,2\}$, with $x_1,x_2 \in \bbZ^d$, $L \in \bbN$. There exist $R_P, L_P < \infty$ and $\varepsilon_P,\chi_P > 0$ such that for any integer $R \geq R_P$ and $a < \widehat{u} < u$ satisfying
\begin{equation}
u \geq (1 + R^{-\chi_P}) \cdot \widehat{u},
\end{equation}
and $|x_1 - x_2|_\infty \geq R \cdot L$, one has
\begin{align}
\label{eq:DecouplingRequirementUpper}
\bbQ_{\widehat{u}}[A_1 \cap A_2] & \leq \bbQ_{u}[A_1] \bbQ_u[A_2] + \exp\Big( - f_P(L) \Big), \\
\label{eq:DecouplingRequirementLower}
\bbQ_u[B_1 \cap B_2] & \leq \bbQ_{\widehat{u}}[B_1] \bbQ_{\widehat{u}}[B_2] + \exp\Big( - f_P(L) \Big),
\end{align}
where $f_P : \bbN \rightarrow \bbR$ is a function that fulfills
\begin{equation}
f_P(L) \geq \exp\Big( (\log L)^{\varepsilon_P} \Big) \qquad \text{for all }L \geq L_P.
\end{equation}
\end{enumerate}
We now introduce a certain \textit{local uniqueness} condition for the family $(\bbQ_u)_{u \in (a,\infty)}$. To this end, we introduce the set for $r \in [0,\infty]$
\begin{equation}
\begin{split}
\mathcal{S}_r  = & \text{ the set of vertices in $\mathcal{S}$ which are in connected components} \\
& \text{ of $\mathcal{S}$ with $\ell^1$-diameter} \geq r.
\end{split}
\end{equation}
\begin{enumerate}[label=\textbf{S1}]
\item \label{LocUniq} There exists a function $f : (a,\infty) \times \bbN \rightarrow \bbR$ such that
\begin{equation}
\begin{minipage}{0.8\linewidth}
 for each $u \in (a,\infty)$, there exist $\Delta_S = \Delta_S(u) > 0$ and $R_S = R_S(u) < \infty$ such that
$f(u,R) \geq (\log R)^{1+ \Delta_S}$ for all $R \geq R_S$,
\end{minipage}
\end{equation}
and for all $u \in (a,\infty)$ and $R \geq 1$, one has the inequalities
\begin{equation}
\label{eq:NoLargeComp_Qu}
\bbQ_u[\mathcal{S}_R \cap B(0,R) \neq \varnothing] \geq 1 - e^{-f(u,R)},
\end{equation}
and 
\begin{equation}
\label{eq:LargeCompNotConn_Qu}
\bbQ_u\left[\bigcap_{x,y \in \mathcal{S}_{R/10} \cap B(0,R)} \{\text{$x$ is connected to $y$ in $\mathcal{S} \cap B(0,2R)$} \} \right] \geq 1 - e^{-f(u,R)}.
\end{equation}
\end{enumerate}
The final condition we require concerns the continuity of the density function 
\begin{equation}
\eta(u) = \bbQ_u[0 \in \mathcal{S}_\infty], \qquad u \in (a,\infty).
\end{equation}
\begin{enumerate}[label=\textbf{S2}]
\item \label{Continuity} The function $\eta : (a,\infty) \rightarrow \bbR$ is positive and continuous. 
\end{enumerate}
 \bigskip
 
In our context, the family $(\bbQ_u)_{u \in (a,\infty)}$ corresponds to the laws $(\bbP_{h(u)})_{u \in (a,\infty)}$ with $a > 0$ and 
\begin{equation}
\label{eq:Relation_h_u}
h(u) = \overline{h} - u, \qquad u > 0,
\end{equation} 
and we recall that $\bbP_h$ stands for the law of $(\mathbbm{1}_{\{\varphi_x \geq h \}})_{x \in \bbZ^d}$ under $\bbP$, see below~\eqref{eq:IncreasingDecreasing}. In particular, with this choice we see that 
\begin{equation}
\label{eq:P_uhSatisfiesS1}
\text{
for any $a > 0$, the family $(\bbP_{h(u)})_{u \in (a,\infty)}$ satisfies condition~\ref{LocUniq}}
\end{equation}
 (indeed, the conditions~\eqref{eq:NoLargcomp} and~\eqref{eq:LargeCompNotConnected} imply the conditions~\eqref{eq:NoLargeComp_Qu} and~\eqref{eq:LargeCompNotConn_Qu}).
For later use, we also introduce some more notation relating to random walks on the infinite connected component of the level set: To this end, we endow $E^{\geq h}_\infty$ with weights
\begin{equation}
\omega_{x,y} = \omega_{y,x} = \begin{cases}
1, & x,y \in E^{\geq h}_\infty,\, x \sim y, \\
0, & \text{else}.
\end{cases}, \qquad \omega_x = \sum_{y \sim x} \omega_{x,y}.
\end{equation}
The weights $(\omega_{x,y})$ are therefore a (measurable) function of the random element $\zeta \in \{0,1\}^{\bbZ^d}$ corresponding to the percolation configuration. Furthermore, we let $P_{\omega,x}$ stand for the law of a (discrete-time) random walk $(X_n)_{n \geq 0}$ on $E^{\geq h}$ defined by the generator
\begin{equation}
\mathcal{L}_\omega f(x) = \sum_{y \sim x} \frac{\omega_{x,y}}{\omega_x} (f(y) - f(x)),
\end{equation}
and initial position $P_{\omega,x}[X_n = x] = 1$.

In the main theorem below, we relate $\overline{h}$ to $h_\ast$ and prove its finiteness. We also give a description of the geometry of $E^{\geq h}$ for $h < \overline{h}$.
\begin{theorem} 
\label{thm:SupercriticalPhase}
One has 
\begin{equation}
\label{eq:NontrivialPhaseTrans}
h_\ast(d) \geq \overline{h}(d) > - \infty, \qquad \text{for all }d \geq 5.
\end{equation}
Moreover, if $h < \overline{h}$, the following hold:
\begin{itemize}
\item[(i)] (Chemical distances) There exist $c(h)$, $c'(h)$, $c''(h), \Delta(h) \in (0,\infty)$ such that
\begin{equation}
\bbP\bigg[\bigcap_{x,y \in E^{\geq h}_L \cap B(0,L) }  \{\rho(x,y) \leq cL \}\bigg] \geq 1 - c' \exp\left(-c''(\log L)^{1 + \Delta(h)} \right).
\end{equation}
\item[(ii)] (Shape theorem) There exists a convex compact set $D_h \subseteq \bbR^d$ such that for every $\varepsilon \in (0,1)$, there is a $\bbP[ \, \cdot \, | 0 \stackrel{ \geq h }{\longleftrightarrow} \infty ]$-a.s.~finite random variable $\widetilde{R}_{\varepsilon,h}$ such that
\begin{equation}
E^{\geq h}_\infty \cap (1-\varepsilon) R \cdot D_h \subseteq B_{E^{\geq h}}(0,R) \subseteq E^{\geq h}_\infty \cap (1-\varepsilon) R \cdot D_h, \qquad \text{for all }R \geq \widetilde{R}_{\varepsilon,h}. 
\end{equation}
\item[(iii)] (Quenched invariance principle) For $\bbP[ \, \cdot \, | 0 \stackrel{ \geq h }{\longleftrightarrow} \infty ]$-a.e.~$\omega$, for $T \in (0,\infty)$, the law of $(\widetilde{B}_n(t))_{0 \leq t \leq T}$ on $(C([0,T]), \mathcal{W}_T)$ (the space of continuous functions from $[0,T]$ to $\bbR^d$, endowed with its Borel $\sigma$-algebra) under $P_{\omega,x}$, where
\begin{equation}
\widetilde{B}_n(t) = \frac{1}{\sqrt{n}}\big(X_{\lfloor nt \rfloor } + (nt - \lfloor nt \rfloor) (X_{\lfloor nt \rfloor + 1}  -X_{\lfloor nt \rfloor}) \big),
\end{equation}
converges weakly to the law of an isotropic Brownian motion with zero drift and positive determinstic diffusion constant.
\item[(iv)] (Quenched heat kernel estimates) There exist random variables $(T_x(\varphi))_{x \in \bbZ^d}$ such that $T_x < \infty$, $\bbP[ \, \cdot \, | 0 \stackrel{ \geq h }{\longleftrightarrow} \infty ]$-a.s., with $\bbP[T_x \geq r] \leq c(h) \exp(-c'(h) (\log(r))^{1+\Delta(h)})$, $x \in \bbZ^d$ and $\bbP[ \, \cdot \, | 0 \stackrel{ \geq h }{\longleftrightarrow} \infty ]$-a.s., for every $x,y \in E^{\geq h}_\infty$ and $t \geq T_x$, 
\begin{equation}
\begin{split}
P_{\omega,x}[X_{\lfloor t \rfloor} = y] & \leq c(h)t^{-d/2} e^{- c'(h)\rho(x,y)^2 /t }, \qquad t \geq \rho(x,y), \\
P_{\omega,x}[X_{\lfloor t \rfloor} = y] + P_{\omega,y}[X_{\lfloor t \rfloor} = y] & \geq c''(h)t^{-d/2} e^{- c'''(h)\rho(x,y)^2 /t }, \qquad t \geq \rho(x,y)^{3/2}.
\end{split}
\end{equation}
\end{itemize}
\end{theorem}

\begin{proof}
We first prove~\eqref{eq:NontrivialPhaseTrans}. By a Borel-Cantelli argument, it is straightforward to show that for any $a > 0$ and any family $(\bbQ_u)_{u \in (a,\infty)}$ of probability measures on $(\{ 0,1\}^{\bbZ^d}, \cG)$,~\ref{LocUniq} implies that for every $u \in (a,\infty)$, 
\begin{equation}
\label{eq:InfinityProperty}
\text{$\bbQ_u$-a.s., $\cS_\infty$ is non-empty and connected},
\end{equation}
see also (2.8) of~\cite{drewitz2014chemical}. We now consider for any fixed $a > 0$ the family $(\bbQ_u)_{u \in (a,\infty)}$ with $\bbQ_u = \bbP_{h(u)}$ and $h(u)$ as in~\eqref{eq:Relation_h_u}. From~\eqref{eq:P_uhSatisfiesS1},~\eqref{eq:InfinityProperty}, we therefore see that for every $h < \overline{h}$, we must have that $E^{\geq h}$ percolates $\bbP$-a.s., therefore $\overline{h} \leq h_\ast$.

We now argue that $\overline{h} > -\infty$. To that end, recall the notion of a $\ast$-path from Section~\ref{sec:Notation_useful_results}. By the same proof as for Theorem~\ref{thm:SubcriticalPhase}, there exists $\widetilde{h}_{\ast\ast}(d) < \infty$ such that for every $h > \widetilde{h}_{\ast\ast}(d)$, one has for every $\varrho \in (0,1)$ that
\begin{equation}
\label{eq:StretchedExpDecay_starpath}
\bbP\Big[ \text{ $0$ and  $x$ are $\ast$-connected in $E^{\geq h}$ } \Big] \leq c(h,\varrho) e^{- c'(h,\varrho)|x|^\varrho }, \qquad \text{for }x \in \bbZ^d.
\end{equation}
Moreover, by symmetry of the membrane model, $E^{\geq h}$ and $\bbZ^d \setminus E^{\geq -h}$ have the same law. By standard duality arguments, we then see that if $h > \widetilde{h}_{\ast\ast}$, we necessarily have that $\varphi$ strongly percolates at level $-h$ (see, e.g.~\cite[Section III.D]{drewitz2014chemical} or~\cite[Remark 4.7.~1)]{rodriguez2016decoupling}). This implies that $\overline{h} \geq -\widetilde{h}_{\ast\ast} > -\infty$, and concludes the proof of~\eqref{eq:NontrivialPhaseTrans}.

We now turn to the proofs of claims (i)--(iv). For this, we verify the conditions~\ref{inv_ergodic} -- \ref{decorrelation} and ~\ref{LocUniq} -- \ref{Continuity} for the family $(\bbQ_u)_{u \in (a,\infty)}$ for any positive $a > 0$ and $\bbQ_u = \bbP_{h(u)}$ (see~\eqref{eq:Relation_h_u}). The claims (i)--(iv) will then follow by~\cite[Theorems 2.3, 2.5]{drewitz2014chemical},~\cite[Theorem 1.1]{procaccia2016quenched} and~\cite[Theorem 1.15]{sapozhnikov2017random}, respectively.

Condition~\ref{inv_ergodic} follows immediately from the translation invariance and ergodicity of $\bbP$ with respect to the lattice shifts, see below~\eqref{eq:Lattice_shift_def} and Lemma~\ref{lem:Ergodic}. For condition~\ref{monotonicity}, note that for any $u < u'$ in $(u_0,\infty)$, one has $E^{\geq \overline{h}- u} \subseteq E^{\geq \overline{h} - u'}$.

We now verify condition~\ref{decorrelation}. For $u,\widehat{u} > a$ satisfying $u \geq (1 + C R^{\frac{4-d}{2}}) \widehat{u}$, one has $u \geq \widehat{u} + CaR^{\frac{4-d}{2}}$ and with this, $h(\widehat{u}) = \overline{h} - \widehat{u} \geq h(u) + Ca R^{\frac{4-d}{2}}$. 

Now for any $h, \widehat{h} \in \bbR$ with $\widehat{h} \geq h + C(a)R^{\frac{4-d}{2}}$, $x_1,x_2 \in \bbZ^d$, $L \in \bbN$ and $R \geq R_P$ upon utilizing Theorem~\ref{thm:Decoup} (with $r = R$) and setting $R_P = 100$, one finds that
\begin{equation}
\begin{split}
\bbP_{\widehat{h}}[A_1 \cap A_2] & \leq \bbP_{h}[A_1]\bbP_h[A_2] + cL^d \exp\left(-C' \left(\frac{C(a)}{R^{\frac{d-4}{2}}}\right)^2    (RL)^{d-4} \right) \\
& \leq \bbP_{h}[A_1]\bbP_h[A_2] + c'(a) \exp\left( - \tilde{C}(a) L^{d-4} \right),
\end{split}
\end{equation}
for $L$ large enough. This shows that the family of probability measures $(\bbQ_u)_{u \in (a,\infty)}$ satisfies~\eqref{eq:DecouplingRequirementUpper} with $\varepsilon_P \in (0,1)$ and $\chi_P \in (0, \frac{d-4}{2})$. Since~\eqref{eq:DecouplingRequirementLower} follows similarly,~\ref{decorrelation} is fulfilled.

Condition~\ref{LocUniq} is automatically fulfilled by the definition of $\overline{h}$, see~\eqref{eq:P_uhSatisfiesS1}. Finally, we verify condition~\ref{Continuity}. The positivity of $\eta$ on $(a,\infty)$ is immediate since $\overline{h} \leq h_\ast$, by the definition of $h_\ast$ and the continuity follows in the same way as for the GFF (see, e.g.~\cite[Lemma A.1]{abacherli2019local}).
\end{proof}

\begin{remark}
The conditions~\ref{inv_ergodic} and \ref{monotonicity} as well as a slightly stronger version of~\ref{decorrelation} have also appeared recently in~\cite{andres2021first} in the context of first passage percolation for various strongly correlated percolation models. The methods developed in this work ought to be pertinent to show results concerning the positivity of
the time constant for the passage times of the level set of the membrane model (by adapting the proof in~\cite[Section 4]{andres2021first}).
\end{remark}

\section{Positivity of \texorpdfstring{$h_*$}{} in high dimensions}\label{sec:PositivityHD}

The main purpose of this section is the proof of Theorem~\ref{thm:positivity high dimension} below, which states that in high dimensions percolation already occurs in a two-dimensional slab $\bbZ^2\times [0,L_0]\times\{0\}^{d-3}$ for sufficiently large $L_0$ at a positive level $h_0$. As a result, in the large dimension regime, the level-set of the membrane model above a positive and sufficiently small level contains an infinite cluster with probability one. As a further consequence in high dimensions the sign clusters of the membrane model $\{x\in \bbZ^d: \varphi_x \geq 0\}$ and $\{x\in \bbZ^d: \varphi_x \leq 0\}$ percolate.

The key ingredient for the proof is a suitable covariance decomposition (see  Lemma~\ref{lem:cov decomposition} below) of the membrane model restricted to $\bbZ^3\times\{0\}^{d-3}$ into the sum of two independent fields, one of which is made of i.i.d.\ Gaussians and represents the dominant part, while the other only acts
as a ``perturbation''.

\begin{theorem}\label{thm:positivity high dimension} There exist $d_0\geq 8$, $h_0>0$ and an integer $L_0\geq 1$ such that for all $d\geq d_0$
  \begin{equation}
    \bbP\big[E^{\geq {h_0}}\cap(\bbZ^2\times [0,L_0]\times\{0\}^{d-3}) \text{ contains an infinite cluster}\,\big] = 1.
  \end{equation}
In particular $h_\ast(d)\geq h_0$ for all $d\geq d_0$.
\end{theorem}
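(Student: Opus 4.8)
The plan is to decompose the membrane model restricted to a two–dimensional slab into a dominant i.i.d.\ Gaussian field plus a small correlated perturbation, and then run a Peierls/contour argument for Bernoulli site percolation on $\mathbb{Z}^2$. First I would restrict attention to the slab $\mathbb{Z}^2\times[0,L_0]\times\{0\}^{d-3}$ and, via Lemma~\ref{lem:cov decomposition} (the covariance decomposition announced before the statement), write $\varphi_x = \sqrt{a_d}\,\mathcal{I}_x + \Phi_x$ for $x$ in the slab, where $(\mathcal{I}_x)$ are i.i.d.\ standard Gaussians, $(\Phi_x)$ is an independent centered Gaussian field, and $a_d = G(0) - \sup_x \mathrm{Var}[\Phi_x] \to 1$ as $d\to\infty$ by the expansion $G(0) = 1 + \tfrac{3}{2d} + o(1/d)$ from Lemma~\ref{lem:BesselGreen}. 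The point is that for $d$ large, $a_d$ is bounded away from $0$ (say $a_d \geq 1/2$) while $\sup_x\mathrm{Var}[\Phi_x]$ is bounded above uniformly in $d$ and in $L_0$ (this is where the asymptotics on $G(0,0)$ — the expected number of intersections of two walks from the origin — and a comparison of the slab Green function with $G$ are used; the $d-3$ frozen coordinates and the finite thickness $L_0$ keep the correction controlled).

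Next I would fix a small level $h_0>0$ and a threshold $\kappa>0$ and declare a site $x\in\mathbb{Z}^2\times[0,L_0]\times\{0\}^{d-3}$ "good" if $\mathcal{I}_x \geq (h_0+\kappa)/\sqrt{a_d}$ and "$\Phi$-controlled" if $|\Phi_x|\leq \kappa$; on the intersection we have $\varphi_x\geq h_0$, so $x\in E^{\geq h_0}$. The event $\{\mathcal{I}_x \geq (h_0+\kappa)/\sqrt{a_d}\}$ has probability $q$ which, since $a_d\to 1$, can be made as close to $\tfrac12$ as we like by taking $h_0,\kappa$ small and $d$ large — in particular $q$ can be pushed above the critical threshold $1-p_c^{\mathrm{site}}(\mathbb{Z}^2) > 1/2$ for the Peierls argument to apply to the i.i.d.\ field alone. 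For the perturbation, I would use that $\sup_x\mathrm{Var}[\Phi_x]\leq \sigma^2$ uniformly, so by a Gaussian tail bound $\mathbb{P}[|\Phi_x|>\kappa]\leq 2e^{-\kappa^2/(2\sigma^2)}$, which can be made arbitrarily small by choosing $\kappa$ appropriately relative to the (fixed) $\sigma^2$ — one must check these two smallness requirements on $\kappa$ (small enough that $q$ stays supercritical, yet the $\Phi$-tail stays small) are compatible, which they are since only $h_0$ needs to shrink to keep $q$ large while $\kappa$ can be taken moderate.

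The percolation argument itself: since $(\mathcal{I}_x)$ is i.i.d.\ and $(\Phi_x)$ is independent of it, the field $(\mathbbm{1}_{\{x\text{ good and }\Phi\text{-controlled}\}})$ dominates — or rather, I would instead argue directly that in any fixed horizontal layer $\mathbb{Z}^2\times\{k\}$ the "open" sites (good and $\Phi$-controlled) stochastically dominate a Bernoulli site percolation of parameter $q' = q\cdot(1 - 2e^{-\kappa^2/2\sigma^2})$ — wait, $(\Phi_x)$ is correlated, so domination is not immediate. Instead I would run a standard Peierls contour estimate: a blocking $*$-circuit of "closed" sites around the origin in $\mathbb{Z}^2\times\{0\}$ forces, for each of its vertices, either $\mathcal{I}_x < (h_0+\kappa)/\sqrt{a_d}$ or $|\Phi_x|>\kappa$; using independence of $\mathcal{I}$ from $\Phi$, the FKG inequality for the Gaussian field $\Phi$ (negative correlation of the threshold events is not available, but a union bound over which of the two failures occurs at each contour vertex, combined with the i.i.d.\ structure of $\mathcal{I}$ and a crude bound $\mathbb{P}[\exists\text{ bad-}\Phi\text{ subset }S\text{ of contour}]$ via a union bound and the Gaussian tail applied coordinatewise), one gets that the probability of a blocking circuit of length $n$ is at most $(C\rho)^n$ with $\rho<1$ when $h_0,\kappa,d$ are chosen suitably, so by Borel–Cantelli there is a.s.\ an infinite open cluster in the slab, all of whose sites lie in $E^{\geq h_0}$. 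The last sentence $h_\ast(d)\geq h_0$ is then immediate from the definition~\eqref{eq:h_ast_Def} and the $0$–$1$ law of Lemma~\ref{lem:Ergodic}, since an infinite cluster in the slab is in particular an infinite cluster in $E^{\geq h_0}$.

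The main obstacle I anticipate is controlling the perturbation term $\Phi$ together with the contour sum: because $\Phi$ is genuinely correlated (no FKG giving the clean product bound one would have for an i.i.d.\ field), the Peierls estimate requires either (a) a decoupling/Gaussian comparison to bound $\mathbb{P}[\bigcap_{x\in S}\{|\Phi_x|>\kappa\}]$ for arbitrary finite $S$ — which is delicate since it involves a large-deviation estimate for a correlated Gaussian field — or (b) absorbing $\Phi$ into the definition of goodness more cleverly (e.g.\ a coarse-grained block argument where a block is good if $\mathcal{I}$ is large on it and $\sup_{\text{block}}|\Phi|$ is small, the latter controlled by Borell–TIS applied to $\Phi$ on a block). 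Option (b) seems cleanest: it reduces the correlated part to a single sup-bound per block via Borell–TIS (Theorem 2.1.1 of \cite{adler2009random}), which only needs $\mathbb{E}[\sup_{\text{block}}\Phi]$ and $\sup\mathrm{Var}[\Phi]$ to be controlled, both of which are dimension-uniform. A secondary technical point is proving the covariance decomposition lemma itself — i.e.\ that $G(x,y) - a_d\delta_{x,y}$ restricted to the slab is positive semidefinite for $d$ large — which is presumably handled in Lemma~\ref{lem:cov decomposition} by a Fourier/spectral argument combined with the $G(0)=1+\tfrac{3}{2d}+o(1/d)$ asymptotics, borrowing the idea from \cite{rodriguez2013phase}.
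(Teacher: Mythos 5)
Your overall skeleton (decompose the field on the slab into a dominant i.i.d.\ Gaussian part plus an independent correlated perturbation via Lemma~\ref{lem:cov decomposition}, then run a contour argument, following~\cite{rodriguez2013phase}) is indeed the route the paper takes, but two essential mechanisms are missing or wrong in your execution. First, the control of the correlated perturbation: your option (a) --- a union bound over which failure occurs at each contour vertex combined with ``the Gaussian tail applied coordinatewise'' --- does not work, because without independence (or negative association) one cannot factorize $\bbP\big[\bigcap_{x\in S}\{\Phi_x<-\kappa\}\big]$ into a product; a plain union bound only yields $\min_{x\in S}\bbP[\Phi_x<-\kappa]$, which is not exponentially small in $|S|$, and your option (b) (Borell--TIS per block) has the same defect across different blocks. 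The whole point of Lemma~\ref{lem:cov decomposition} giving a bound on the \emph{spectral radius} $\rho_s(\Phi)\leq c_8/d$, rather than merely a pointwise variance bound, is the estimate
\begin{equation}
\bbP\Big[\Phi_x\le -\kappa \ \text{for all } x\in S\Big]\ \le\ \bbP\Big[\textstyle\sum_{x\in S}\Phi_x\le -\kappa|S|\Big]\ \le\ \exp\Big(-\tfrac{\kappa^2 |S|\, d}{2c_8}\Big),
\end{equation}
since $\var\big(\sum_{x\in S}\Phi_x\big)\le \rho_s(\Phi)\,|S|$. This gives a bound exponentially small in $|S|$ with a rate that grows with $d$, which is exactly what closes the Peierls sum in the argument of~\cite{rodriguez2013phase} that the paper invokes. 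You only ever use $\sup_x\var[\Phi_x]$ (and treat it as an $O(1)$ constant $\sigma^2$ rather than $O(1/d)$), so your contour estimate does not close.

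Second, your percolation step ignores the role of the thickness $L_0$. In a single layer $\bbZ^2\times\{k\}$ the i.i.d.\ opening probability $q=\bbP[\mathcal{I}_x\ge (h_0+\kappa)/\sqrt{a_d}]$ is strictly below $1/2$ for any positive level, while $p_c^{\mathrm{site}}(\bbZ^2)\approx 0.59>1/2$ (your claim that $1-p_c^{\mathrm{site}}(\bbZ^2)>1/2$ is false), so neither stochastic domination nor a Peierls bound with closed-site probability $1-q\ge 1/2$ can possibly work in one layer; the series over blocking $\ast$-circuits diverges. The thickness $L_0$ is not there to ``keep the correction controlled'': it is what boosts the relevant opening probability (per column of height $L_0+1$) to something close to $1$, so that the planar duality/Peierls argument, combined with the summed-variance bound above for the perturbation along the columns of a blocking circuit, produces a convergent contour series for $d$ and $L_0$ large and $h_0$ small. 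With these two ingredients restored --- both of which are precisely what Lemma~\ref{lem:cov decomposition} and the scheme of~\cite{rodriguez2013phase} supply --- your outline matches the paper's proof; as written, it has genuine gaps at exactly these two points. Your final reduction (infinite slab cluster implies $h_\ast(d)\ge h_0$ via~\eqref{eq:h_ast_Def} and Lemma~\ref{lem:Ergodic}) is fine.
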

\begin{proof}
  The argument is analogous to that for the proof of Theorem 3.3 in~\cite[Section 3]{rodriguez2013phase} with Lemma 3.1 therein replaced by Lemma~\ref{lem:cov decomposition} below.
\end{proof}

We now proceed in stating and proving the aforementioned covariance decomposition for the membrane model. We start by introducing some notation. In the following, we set
\begin{equation}
  K = \bbZ^{3} \times\{0\}^{d-3},\qquad H = \{0\}^3\times \big(\bbZ^{d-3}\setminus\{0\}\big),
\end{equation}
and note that $H + K = K^c$. 
\begin{lemma}[Covariance decomposition]\label{lem:cov decomposition} Let $d\geq 8$, then there exists a function $\phi$ on $K\times K$ such that 
  \begin{equation}
    G(x,y) = \gamma(d)\cdot \mathbbm{1}_{\{x = y\}} + \phi(x,y),\qquad \text{for all $x,y\in K$,}
  \end{equation}
  where $1/4\leq\gamma(d)\leq 1$ and $\gamma(d)\to 1$ as $d\to \infty$ and where $\phi$ is the kernel of a bounded symmetric, translation invariant, positive operator $\Phi$ on $\ell^2(K)$ defined by
  \begin{equation}
    \Phi f(x) = \sum_{y\in K} \phi(x,y) f(y),\qquad f\in \ell^2(K).
  \end{equation}
 Moreover, there exists a constant $c_{12} > 0$  such that the spectral radius of $\Phi$ satisfies
  \begin{equation}
    \rho_s(\Phi) \leq c_{12}/d.
  \end{equation}
\end{lemma}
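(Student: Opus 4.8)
\medskip
\noindent\textit{Proof idea.} The identity $G(x,y)=\gamma(d)\mathbbm{1}_{\{x=y\}}+\phi(x,y)$ is the covariance counterpart of splitting $\varphi|_{K}$ into an i.i.d.\ Gaussian field of variance $\gamma(d)$ plus an independent ``perturbation'' field of covariance $\phi$; so $\phi$ is simply \emph{defined} by $\phi(x,y):=G(x,y)-\gamma(d)\mathbbm{1}_{\{x=y\}}$, and the work is to choose $\gamma(d)$ and verify the operator-theoretic claims. The plan is to diagonalise $G|_{K\times K}$ by Fourier analysis, identifying $K$ with $\bbZ^{3}$ via $(u,0,\dots,0)\mapsto u$ and working on the torus $\bbT^{3}=(\bbR/2\pi\bbZ)^{3}$.

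\noindent\textit{Step 1: the Fourier symbol.} By the translation invariance~\eqref{eq:G_translation_inv}, $G|_{K\times K}$ acts on $\ell^{2}(K)\cong\ell^{2}(\bbZ^{3})$ as convolution by $G_{K}(w):=G\big((w_{1},w_{2},w_{3},0,\dots,0)\big)$. Since $d\geq 8$, the bound~\eqref{eq:BoundGreenFct} gives $\sum_{w\in\bbZ^{3}}G_{K}(w)\leq c_{1}\sum_{w\in\bbZ^{3}}\big(|w|^{4-d}\vee 1\big)<\infty$ (as $d-4>3$), so $G_{K}\in\ell^{1}(\bbZ^{3})$; its Fourier transform $\widehat{G_{K}}(\theta)=\sum_{w\in\bbZ^{3}}G_{K}(w)e^{\mathrm{i}\theta\cdot w}$ is then a real (by symmetry of $G$) continuous function on $\bbT^{3}$, and $G|_{K\times K}$ is a bounded self-adjoint operator with spectrum $\widehat{G_{K}}(\bbT^{3})$. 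I would then insert the Bessel representation of Lemma~\ref{lem:BesselGreen}, exchange the sum over $w$ with the integral by Tonelli (all $I_{n}\geq 0$; the interchanged integral equals $F(3)$, finite for $d\geq 8$ — see Step 3), and apply the generating identity $\sum_{n\in\bbZ}I_{n}(z)e^{\mathrm{i}n\vartheta}=e^{z\cos\vartheta}$ to obtain
\[
\widehat{G_{K}}(\theta)=\int_{0}^{\infty}t e^{-t}\big(I_{0}(t/d)\big)^{d-3}e^{\frac{t}{d}(\cos\theta_{1}+\cos\theta_{2}+\cos\theta_{3})}\,\De t=F\big(\cos\theta_{1}+\cos\theta_{2}+\cos\theta_{3}\big),
\]
where $F(s):=\int_{0}^{\infty}t e^{-t}(I_{0}(t/d))^{d-3}e^{st/d}\,\De t$ for $s\in[-3,3]$. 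Because $F'(s)>0$, $F$ is increasing, so $\mathrm{spec}\big(G|_{K\times K}\big)=[F(-3),F(3)]$.

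\noindent\textit{Step 2: the decomposition.} I would put $\gamma(d):=(1+3/d)^{-2}$, so $\gamma(d)\in\big[(1+3/8)^{-2},1\big]=[64/121,1]\subseteq[1/4,1]$ for $d\geq 8$, and $\gamma(d)\to 1$. Since $I_{0}\geq 1$ and $d>3$,
\[
F(-3)\ \geq\ \int_{0}^{\infty}t e^{-(1+3/d)t}\,\De t\ =\ (1+3/d)^{-2}\ =\ \gamma(d).
\]
Set $\Phi:=G|_{K\times K}-\gamma(d)\,\mathrm{Id}_{\ell^{2}(K)}$; its kernel is $\phi(x,y)=G(x,y)-\gamma(d)\mathbbm{1}_{\{x=y\}}$ on $K\times K$, which is the claimed identity, and $\Phi$ is bounded, symmetric and translation invariant (a convolution operator). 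Its symbol is $\widehat{G_{K}}(\theta)-\gamma(d)\geq F(-3)-\gamma(d)\geq 0$, so $\Phi$ is positive and $\rho_{s}(\Phi)=\|\Phi\|=\sup_{\theta}\big(\widehat{G_{K}}(\theta)-\gamma(d)\big)=F(3)-\gamma(d)$.

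\noindent\textit{Step 3: the spectral bound, and the main difficulty.} It remains to prove $F(3)-\gamma(d)\leq c_{8}/d$. Writing $F(3)-\gamma(d)=\big(F(3)-1\big)+\big(1-(1+3/d)^{-2}\big)$, the last term is $\leq 6/d+9/d^{2}$. For $F(3)-1=\int_{0}^{\infty}t e^{-t}\big[(I_{0}(t/d))^{d-3}e^{3t/d}-1\big]\,\De t\ (\geq 0)$ I would split at $t=d$. On $[0,d]$, the bound $\log I_{0}(z)\leq z^{2}/4$ gives $(I_{0}(t/d))^{d-3}e^{3t/d}\leq e^{t^{2}/(4d)+3t/d}$ with exponent $-t+t^{2}/(4d)+3t/d\leq-3t/8$ for $d\geq 8$, so by $e^{x}-1\leq xe^{x}$ this contribution is $\leq\tfrac{1}{d}\int_{0}^{\infty}\big(\tfrac{t^{3}}{4}+3t^{2}\big)e^{-3t/8}\,\De t=O(1/d)$. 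On $[d,\infty)$, where $z=t/d\geq 1$, the estimate $I_{0}(z)\leq C_{1}e^{z}z^{-1/2}$ with a constant $C_{1}\in(0,1)$ (from $I_{0}(z)\sim e^{z}/\sqrt{2\pi z}$, (9.7.1) in~\cite{abramovitz1964handbook}) gives $(I_{0}(t/d))^{d-3}e^{3t/d}e^{-t}\leq C_{1}^{d-3}(d/t)^{(d-3)/2}$, whence this contribution is $\leq C_{1}^{d-3}d^{(d-3)/2}\int_{d}^{\infty}t^{1-(d-3)/2}\,\De t=\tfrac{2C_{1}^{d-3}d^{2}}{d-7}$ — the integral converging \emph{exactly} because $d\geq 8$, and $d\cdot\tfrac{2C_{1}^{d-3}d^{2}}{d-7}$ staying bounded uniformly in $d\geq 8$ since $C_{1}<1$. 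Altogether $F(3)-1\leq c/d$, hence $\rho_{s}(\Phi)\leq c_{8}/d$. The hard part is precisely this uniform-in-$d$ control of the Bessel integrals in the tail region $t\gtrsim d$ (also needed for $F(3)<\infty$ in Step 1): it is where the hypothesis $d\geq 8$ is sharp — it is the threshold making $G_{K}\in\ell^{1}(\bbZ^{3})$ — and where the crude bound $I_{0}(z)\leq e^{z}$ is insufficient, forcing the use of the sharp asymptotics of $I_{0}$.
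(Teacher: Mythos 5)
Your proposal is correct, but it takes a genuinely different route from the paper. The paper never diagonalises $G|_{K\times K}$: starting from~\eqref{eq:convolution representation} it splits $G|_{K\times K}=A^2+B$, where $A$ is the simple random walk Green operator restricted to the slab and $B$ collects the contribution of intersection points $z\in K^c$; it then imports the GFF slab decomposition $A=\sigma^2(d)\,\mathrm{Id}+G'$ with $\rho_s(G')\leq c/d$ from Lemma 3.1 of~\cite{rodriguez2013phase} (so $\gamma(d)=\sigma^4(d)$ and $\Phi=2\sigma^2(d)G'+(G')^2+B$), and bounds $\rho_s(B)$ probabilistically: positivity comes from writing $\langle f,Bf\rangle$ as a sum of squares using $K^c=H+K$, and the norm bound comes from the strong Markov property, projecting $X$ to a lazy walk on $\bbZ^{d-3}$, which reduces everything to $\big(\tfrac{d}{d-3}\big)^2\big(G_{d-3}(0)-g_{d-3}(0)^2\big)\leq c/d$ via the expansion~\eqref{eq:expansion for G0} and Montroll's expansion of $g_{d-3}(0)$. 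Your argument is instead a direct Fourier diagonalisation on $\ell^2(K)\cong\ell^2(\bbZ^3)$, using the Bessel representation of Lemma~\ref{lem:BesselGreen} and the generating identity for $I_n$ to identify the symbol $F(\cos\theta_1+\cos\theta_2+\cos\theta_3)$, choosing the explicit value $\gamma(d)=(1+3/d)^{-2}\leq F(-3)$ for the diagonal part, and bounding $\rho_s(\Phi)=F(3)-\gamma(d)$ by elementary estimates on the Bessel integrand split at $t=d$. What each buys: yours is self-contained (no appeal to the GFF lemma of~\cite{rodriguez2013phase}), gives an explicit $\gamma(d)$, and makes the role of $d\geq 8$ (summability of $G(0,\cdot)$ on $K$, i.e.\ finiteness of $F(3)$) transparent; the paper's reuses existing machinery and keeps a probabilistic interpretation of the error term $B$, avoiding any asymptotic analysis of Bessel integrals beyond Lemma~\ref{lem:BesselGreen}. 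Since the lemma is only invoked downstream through its statement, your version serves Theorem~\ref{thm:positivity high dimension} equally well. One small point to tighten: the bound $I_0(z)\leq C_1 e^z z^{-1/2}$ for $z\geq 1$ with a constant $C_1<1$ does not follow from the asymptotics (9.7.1) alone; it does follow, uniformly in $z>0$, from $I_0(z)=\tfrac{1}{\pi}\int_0^\pi e^{z\cos\vartheta}\,\De\vartheta$ together with $1-\cos\vartheta\geq 2\vartheta^2/\pi^2$ on $[0,\pi]$, which yields $I_0(z)\leq \sqrt{\pi/8}\,e^z z^{-1/2}$, and $\sqrt{\pi/8}<1$ is exactly what your tail estimate needs.
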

\begin{proof} The operator $G f(x) = \sum_{y\in K} G(x,y) f(y)$ for $x\in K$ and $f\in \ell^2(K)$ is a translation invariant, bounded convolution operator with convolution kernel given by $G(0,\cdot)$. This follows from~\eqref{eq:G_translation_inv},~\eqref{eq:BoundGreenFct} and Young's convolution inequality $\|G f\|_{\ell^2(K)}\leq \|G(0,\cdot)\|_{\ell^1(K)} \|f\|_{\ell^2(K)}$ as
\begin{equation}
  \|G(0,\cdot)\|_{\ell^1(K)} = \sum_{x\in K} G(0,x) \stackrel{\eqref{eq:BoundGreenFct}}{\leq} \sum_{x\in K} \frac{c_1}{|x|^{d-4}\vee 1} < \infty,
\end{equation}
where the summability follows from the assumption that $d\geq 8$.
  
Furthermore, by~\eqref{eq:convolution representation} we have
\begin{equation}\label{eq:splitting}
  \begin{aligned}
    Gf(x) &= \sum_{y\in K} \Big(\sum_{z\in \bbZ^d} \Gamma(x,z) \Gamma(z,y)\Big) f(y)\\
          &= \sum_{z\in K} \Gamma(x,z) \Big(\sum_{y\in K} \Gamma(z,y)f(y)\Big) + \sum_{y\in K} \Big(\sum_{z\in K^c} \Gamma(x,z) \Gamma(z,y)\Big) f(y) \\
          &=A^2f(x) + Bf(x),
  \end{aligned}
\end{equation}
where $A$ and $B$ are operators on $\ell^2(K)$ defined by
\begin{equation}
Af(x) = \sum_{y\in K} \Gamma(x,y) f(y),\qquad Bf(x) = \sum_{y\in K} \Big(\sum_{z\in K^c} \Gamma(x,z) \Gamma(z,y)\Big) f(y).
\end{equation}
It is immediate to see that $B$ is a bounded translation invariant operator acting on $\ell^2(K)$. We now show it is also positive and with spectral radius satisfying $\rho_s(B)\leq c/d$. From the representation $K^c = H + K$ we have that for all $x,y\in K$ and any $f\in \ell^2(K)$,
\begin{equation}
  \begin{aligned}
    \langle f, Bf\rangle_{\ell^2(K)}  &= \sum_{x\in K} f(x) \Big( \sum_{y\in K} \Big(\sum_{z\in K^c} \Gamma(z,x) \Gamma(z,y)\Big) f(y)\Big)\\
    & = \sum_{x\in K}  \sum_{y\in K} \sum_{u\in H} \sum_{v\in K} f(x) \Gamma(u,x-v) \Gamma(u,y-v) f(y)\\
    & = \sum_{u\in H} \sum_{v\in K} \Big(\sum_{y\in K} \Gamma(u,y-v) f(y)\Big)^2.
  \end{aligned}
\end{equation}
In particular $B$ is a positive operator, moreover by Young's convolution inequality applied to the convolution kernel $\Gamma(u,\cdot)$, we have that for all $f\in \ell^2(K)$ such that $\|f\|_{\ell^2(K)} \leq 1$
\begin{equation}
  \langle f, Bf\rangle_{\ell^2(K)} \leq \sum_{u\in H} \Big(\sum_{x\in K} \Gamma(u,x)\Big)^2.
\end{equation} 
Therefore, we can estimate $\rho_s(B) = \sup\{\langle f, Bf\rangle_{\ell^2(K)}\, :\, \|f\|_{\ell^2(K)} \leq 1\}$ by
\begin{equation}
  \rho_s(B)\leq \sum_{u\in H} \Big(\sum_{x\in K} \Gamma(u,x)\Big)^2.
\end{equation} 
By the strong Markov property it is easy to see that
\begin{equation}\label{eq:bound spectrum}
  \sum_{x\in K} \Gamma(u,x) = E_u\bigg[\sum_{n=0}^\infty \IND_{\{X_n \in K\}}\bigg] = P_u[H_K<\infty]  E_0\bigg[\sum_{n=0}^\infty \IND_{\{X_n \in K\}}\bigg],
\end{equation}
having used that $E_x[\sum_{n=0}^\infty \IND_{\{X_n \in K\}}]$ is independent of $x\in K$.
We consider now the projection $\pi:\bbZ^d \to \bbZ^{d-3}$ defined by $\pi(x_1,\ldots,x_d) = (x_4,\ldots,x_d)$. Then under $P_0$, the process
\begin{equation}
  Y_n = \pi\circ X_n,\qquad \text{for all $n\geq 0$},
\end{equation}
is a lazy walk on $\bbZ^{d-3}$ started at the origin. Moreover, for all $u\in H$,
\begin{equation}\label{eq:lazyness}
  P_u[H_K<\infty] = P_{\pi(u)}[H_0^Y<\infty] = \frac{\Gamma_{d-3}(\pi(u))}{\Gamma_{d-3}(0)},\qquad E_0\bigg[\sum_{n=0}^\infty \IND_{\{Y_n = 0\}}\bigg] = \frac{d}{d-3}\, \Gamma_{d-3}(0).
\end{equation}
Plugging~\eqref{eq:lazyness} into~\eqref{eq:bound spectrum} we obtain
\begin{equation}
  \begin{aligned}
    \rho_s(B)& \leq \Big(\frac{d}{d-3}\Big)^2\sum_{u \in H}  \Gamma_{d-3}(\pi(u))^2 = \Big(\frac{d}{d-3}\Big)^2  \sum_{z \in \bbZ^{d-3}\setminus\{0\}}\Gamma_{d-3}(z)^2\\
    &= \Big(\frac{d}{d-3}\Big)^2 \big(G_{d-3}(0)-\Gamma_{d-3}(0)^2\big).
  \end{aligned}
\end{equation}
By \eqref{eq:expansion for G0} we have that $G_{d-3}(0) = 1 + \tfrac{3}{2(d-3)} + o(\tfrac{1}{d})$ and by~\cite{montroll1956random} also that $\Gamma_{d-3}(0) = 1 + \tfrac{1}{2(d-3)} +o(\tfrac{1}{d})$, and thus $\rho_s(B)\leq c/d$.

We proceed with the study of $A^2$ appearing in~\eqref{eq:splitting}. By Lemma 3.1 in~\cite{rodriguez2013phase}, $A = \sigma^2(d)\, \text{Id} + G'$ with $G'$ being a positive bounded translation invariant operator with $\rho_s(G')\leq c/d$ and $1/2\leq \sigma^2(d)<1$, $\sigma^2(d)\to 1$ as $d\to \infty$. Thus
\begin{equation}
  A^2 = \sigma^4(d) \,\text{Id} + 2\sigma^2(d) G' + (G')^2.
\end{equation}
We now set $\gamma(d) = \sigma^4(d)$ and $\Phi = 2\sigma^2(d)G' + (G')^2 + B$. Then, $\gamma(d)\in [1/4,1]$, $\gamma(d)\to 1$ as $d\to \infty$ and $\Phi$ is a bounded positive translation invariant operator. Furthermore by the spectral theorem
\begin{equation}
  \rho_s(\Phi) \leq  2\sigma^2(d) \rho_s(G') + \rho_s(G')^2 + \rho_s(B) \leq c'/d.
\end{equation}
This concludes the proof of the lemma.
\end{proof} 

\begin{remark} 1) We have shown in Theorem~\ref{thm:positivity high dimension} that for small but positive $h$ the level-set $E^{\geq h}$ percolates in a two dimensional slab, provided that the slab is sufficiently thick and the dimension is large enough. By the same argument for the GFF (see Remark 3.6 1) in~\cite{rodriguez2013phase}) there is no percolation  on $E^{\geq h} \cap \bbZ^2$ for positive values of $h$.
  Indeed, the law on $\{0,1\}^{\bbZ^2}$ of $(\IND_{\{\varphi_x \geq 0\}})_{x\in \bbZ^d}$  under $\bbP$ satisfies the conditions of Theorem 14.3 in~\cite{haggstrom2006uniqueness}, with positive correlations being a result of FKG inequality for the infinite volume membrane model. Thus, $E^{\geq 0} \cap \bbZ^2$ and its complement in $\bbZ^2$ cannot both have infinite connected components almost surely. As $(\IND_{\{\varphi_x \geq 0\}})_{x\in \bbZ^2}$ and $(\IND_{\{\varphi_x < 0\}})_{x\in \bbZ^2}$ have the same law under $\bbP$, if $E^{\geq 0}\cap \bbZ^2$ had an infinite connected component, so would $E^{< 0} \cap \bbZ^2$, leading to a contradiction.

   2) It should be noted that in low dimensions $(5 \leq d < d_0)$ even the question whether $h_\ast(d)\geq 0$ is still open for the membrane model. The contour argument used in~\cite{bricmont1987percolation} for the level-set percolation of the GFF
  does not seem to easily adapt to the present context, essentially due to the lack of a maximum principle for the discrete bilaplacian. One may wonder whether some insight may be gained by considering a contour disconnecting the origin from the boundary of an enclosing box and using on a heuristic level the fact that the membrane model favors constant curvature interfaces. In a different direction, when the underlying graph is replaced by a certain transient tree, the arguments in~\cite{abacherli2018level} might be helpful to show that the critical level for level-set percolation is strictly positive.

3) A careful inspection of the proof in~\cite{rodriguez2013phase} gives that $h_\ast(d) \rightarrow \infty$ as $d \rightarrow \infty$. It is an open problem to derive asymptotics for $h_\ast(d)$ as $d\to\infty$, and we remark that in the case of the GFF one has $h_\ast^{\textrm{GFF}}(d)\sim \sqrt{2\Gamma(0)\log d}$, as shown in~\cite{drewitz2015high}.

\end{remark}

\textbf{Acknowledgements.} The authors wish to thank Florian Schweiger for valuable discussions and for sharing an outline to obtain an improved bound in Lemma~\ref{lem:BulkVarBound}. Furthermore, the authors wish to thank Leandro Chiarini and Franco Severo for suggesting to consider a finite range decomposition for the membrane model, as well as two anonymous referees for their thorough review of the article and for valuable suggestions.

\appendix

\section{Improved bounds on $G - G_N$}
\label{sec:Appendix}

In this appendix, we finish the proof of Lemma~\ref{lem:BulkVarBound} by providing the argument for~\eqref{eq:BehaviorBoundary}. \\

We introduce some more notation related to discrete calculus. For a function $u \in \bbR^{\bbZ^d}$, $x \in \bbZ^d$ and $1 \leq i \leq d$, we let $D_iu(x) = u(x+e_i) - u(x)$ and $D_{-i}u(x)= u(x) - u(x-e_i)$ stand for the discrete forward and backward derivatives, respectively (and $e_1,...,e_d$ denotes the canonical basis of $\bbR^d$). We let $\nabla u(x)$ stand for the discrete gradient of $u$ at $x$, namely $(D_1 u(x),...,D_du(x))$, and $\nabla^2 u(x)$ for the Hessian matrix at $x$, namely $(\nabla^2 u (x))_{i,j} = D_{-i}D_j u(x)$, and note that its trace is the discrete Laplacian multiplied by $2d$, $\Delta u(x) = \frac{1}{2d}\sum_{i = 1}^d D_{-i}D_i u(x)$. For two $(d \times d)$-matrices $A$ and $B$ we also write $A  :  B = \sum_{i,j = 1}^d A_{ij}B_{ij}$. Note that upon using summation for parts, one has for $K \subset \subset\bbZ^d$ that
\begin{equation}
\label{eq:Matrixdoubledot}
(2d)^2 \sum_{x \in K} \Delta^2 u(x) v(x) = \sum_{x \in \overline{K}} \nabla^2 u(x) \, : \, \nabla^2 v(x).
\end{equation}
for functions $v \in \bbR^{\bbZ^d}$ which are zero outside of $K$.

For the proof of~\eqref{eq:BehaviorBoundary}, we fix $x \in B(0,\delta N)$ and consider the function $H_N = G(x,\cdot) - G_N(x,\cdot)$. We can assume without loss of generality that $|x|_\infty = \delta N$ (otherwise decrease $\delta$). By~\eqref{eq:biharmonicBox} and~\eqref{eq:biharmonicFullspace}, the function $H_N$ fulfills the boundary value problem
\begin{equation}
\label{eq:H_BVP}
\begin{cases}
\Delta^2 H_N(y) = 0, & y \in B(0,N), \\
H_N(y) = G(x,y), & y \in \partial_2 B(0,N).
\end{cases}
\end{equation}
By a first-order expansion and~\eqref{eq:Matrixdoubledot} it is easy to see that $H_N$ minimizes the expression
\begin{equation}
\cE_N(u) = \sum_{y \in \overline{B(0,N)}} |\nabla^2 u (y)|^2 = \sum_{y \in \overline{B(0,N)}} \nabla^2 u(y) \, : \, \nabla^2 u(y),
\end{equation}
among all functions $u \in \bbR^{\bbZ^d}$ which fulfill $u = G(x,\cdot)$ on $\partial_2 B(0,N)$.

\begin{lemma}
\label{lem:AppendixLemma}
There exists a function $u \in \bbR^{\bbZ^d}$ with $u = G(x,\cdot)$ on $\partial_2 B(0,N)$ and
\begin{equation}
\label{eq:AppendixLemmaClaim}
\cE_N(u) \leq \frac{C}{((1-\delta)N)^{d-4}}.
\end{equation}
\end{lemma}
\begin{proof}
Consider a discrete cutoff function $\chi$, supported on $B(x,2(1-\delta)N/3)$ with $\chi(y) = 1$ for $y \in B(x, (1-\delta)N/2)$ and $| \nabla^\kappa \chi (y) | \leq C((1-\delta)N)^{-\kappa}$ for $\kappa \leq 2$. Set $u = (1-\chi)G(x,\cdot)$, then $u$ has the correct boundary values on $\partial_2 B(0,N)$. Moreover, by using standard estimates on the first and second discrete derivatives of $\Gamma(0,x)$ (see~\cite{lawler2013intersections}) together with~\eqref{eq:convolution representation} one has that for any $\kappa \leq 3$, 
\begin{equation}
\label{eq:Mangad_result}
|\nabla^\kappa G(x,y)| \leq \frac{C}{|x-y|^{d-4+\kappa}\vee 1}, \qquad x,y \in \bbZ^d.
\end{equation}
 Using this together with the bound on $\chi$, we have that
\begin{equation}
\label{eq:EnergyBoundPart1}
\begin{split}
\cE_N(u) & = \sum_{y \in \overline{B(0,N)} \setminus B(x,(1-\delta)N)} |\nabla^2 u(y)|^2 + \sum_{B(x,(1-\delta)N) \setminus B(x,(1-\delta) N/2)} |\nabla^2 u(y)|^2 \\
& \leq   \sum_{y \in B(x,(1-\delta)N) \setminus B(x,(1-\delta) N/2)} \left(\sum_{\kappa = 0}^2 \frac{C}{((1-\delta)N)^{2-\kappa}} \cdot \frac{C}{|x-y|^{d-4+\kappa}} \right)^2 \\
& +  \sum_{y \in \overline{B(0,N)} \setminus B(x,(1-\delta)N)} |\nabla^2 u(y)|^2 \\ 
& \leq \frac{C'}{((1-\delta) N)^{d-4}} + \sum_{y \in \overline{B(0,N)} \setminus B(x,(1-\delta)N)} |\nabla^2 u(y)|^2 .
\end{split}
\end{equation}

To treat the remaining term, note that outside of $B(x,(1-\delta)N)$, we have $u = G(x,\cdot)$ which is discrete biharmonic, so for any $M \geq 10$, summation by parts gives
\begin{equation}
\label{eq:EnergyBoundPart2}
\begin{split}
\sum_{y \in \overline{B(0,N)} \setminus B(x,(1-\delta)N)} |\nabla^2 u(y)|^2 & \leq \sum_{y \in B(0,MN) \setminus B(x,(1-\delta)N)} |\nabla^2 u(y)|^2 \\
& \leq ((1-\delta)N)^{d-1} \frac{C}{((1-\delta)N)^{2d-5}} + (MN)^{d-1}  \frac{C}{((M-1)N))^{2d-5}} \\
& \leq \frac{C'}{((1-\delta)N)^{d-4}} + \frac{C'}{(MN)^{d-4}},
\end{split}
\end{equation}
having used~\eqref{eq:Mangad_result} and the fact that the boundary terms yield products of the first and second discrete derivatives of $G$, or $G$ and its third derivatives, respectively. Since $M \geq 10$ was arbitrary, we can send it to infinity and obtain from combining~\eqref{eq:EnergyBoundPart1} and~\eqref{eq:EnergyBoundPart2} the claim~\eqref{eq:AppendixLemmaClaim}.
\end{proof}

We will now combine the bound above with a discrete Caccioppoli inequality taken from~\cite{muller2019estimates}.

\begin{proof}[Proof of~\eqref{eq:BehaviorBoundary}]
Consider a point $\widehat{x} \in \partial B(0,N)$ that minimizes the distance of $x$ to $\partial B(0,N)$ and denote by $\Gamma_{x,\widehat{x}}$ all points in $\bbZ^d$ on the straight line connecting $x$ to $\widehat{x}$. We claim that for the solution $H_N$ of~\eqref{eq:H_BVP}, one has
\begin{equation}
\label{eq:H_pointwiseBound}
|\nabla^2 H_N(y)| \leq \frac{C}{((1-\delta)N)^{d-2}}, \qquad y \in \Gamma_{x,\widehat{x}}.
\end{equation}
From this, the claim~\eqref{eq:BehaviorBoundary} will follow. Indeed, suppose that~\eqref{eq:H_pointwiseBound} holds. By~\eqref{eq:H_BVP} and~\eqref{eq:Mangad_result}, one has that $|H_N(\widehat{x})| = |G(x,\widehat{x})| \leq \frac{C}{((1-\delta)N)^{d-4}}$ and $|D_\nu H_N (\widehat{x})| \leq \frac{C}{((1-\delta)N)^{d-3}}$, where $D_\nu$ stands for the discrete outwards-pointing derivative. Using~\eqref{eq:H_pointwiseBound} and summation along the line connecting $x$ to $\widehat{x}$, one obtains 
\begin{equation}
|H_N(x)| \leq \frac{C}{((1-\delta)N)^{d-4}},
\end{equation}
which is the claim since $\text{Var}[\xi_x^{B(0,N)}] = G(x,x) - G_N(x,x) = H_N(x)$. We are therefore left with proving~\eqref{eq:H_pointwiseBound}. By~\cite[Lemma 5]{muller2019estimates}, one has the Caccioppoli inequality stating that for every $y \in \mathbb{Z}^d$ and $r > 0$ with $B(y,r) \subseteq B(0,N)$, it holds that
\begin{equation}
\label{eq:Caccioppoli1}
|\nabla^2 H_N(y)| \leq \frac{C}{r^{\frac{d}{2}}} \Bigg( \sum_{z \in \overline{B(y,r)}} | \nabla^2 H_N(z) |^2 \Bigg)^{\frac{1}{2}} \quad \bigg(\leq \frac{C}{r^{\frac{d}{2}}} \cE_N(H_N)^{\frac{1}{2}}\bigg).
\end{equation}
(in~\cite{muller2019estimates}, the statement is written for $d = 2,3$, but the proof remains valid in dimensions $d \geq 4$, in particular in our case, where $d\geq 5$). Recall now that $H_N$ minimizes $\cE_N$ among all discrete biharmonic functions equal to $G(x,\cdot)$ on $\partial_2 B(0,N)$, so by Lemma~\ref{lem:AppendixLemma}, we have for every $y \in B(0,N)$ with $N - |y|_\infty = r \geq \frac{(1-\delta)N}{100}$ that
\begin{equation}
\label{eq:Caccioppoli2}
|\nabla^2 H_N(y)|\stackrel{\eqref{eq:Caccioppoli1}}{ \leq}  \frac{C}{r^{\frac{d}{2}}} \cE_N(u)^{\frac{1}{2}} \stackrel{\eqref{eq:AppendixLemmaClaim}}{ \leq} \frac{C'}{((1-\delta)N)^{\frac{d}{2} } ((1-\delta)N)^{\frac{d}{2}-2} }.
\end{equation}
Now if on the other hand $N - |y|_\infty = r \leq \frac{(1-\delta)N}{100}$, we use the triangle inequality and~\eqref{eq:Mangad_result} to find that
\begin{equation}
|\nabla^2 H_N(y)| \leq |\nabla^2 G_N(x,y)| + |\nabla^2 G(x,y)| \leq |\nabla^2 G_N(x,y)| + \frac{C}{((1-\delta)N)^{d-2}}, 
\end{equation}
since $|x-y|_\infty \geq \frac{(1-\delta)N}{2}$.

Note that $G_N(x,\cdot)$ has zero boundary conditions outside of $B(0,N)$ and is discrete biharmonic in $B(\widehat{x},(1-\delta)\frac{N}{2}) \cap B(0,N)$, therefore we can use first~\eqref{eq:Caccioppoli1} with $r = N - |y|_\infty+1/2$, followed by the Caccioppoli inequality in the half-space, see~\cite[Lemma 9]{muller2019estimates}. This yields
\begin{equation}
\label{eq:Caccioppoli3}
\begin{split}
|\nabla^2 G_N(x,y)| & \leq \frac{C}{r^{\frac{d}{2}}} \Bigg( \sum_{z \in \overline{B(y,r)}} | \nabla^2 G_N(x,z) |^2 \Bigg)^{\frac{1}{2}} \leq \frac{C}{r^{\frac{d}{2}}} \Bigg( \sum_{z \in \overline{B(\widehat{x},2r)} \cap B(0,N)} | \nabla^2 G_N(x,z) |^2 \Bigg)^{\frac{1}{2}} \\
& \leq \frac{C}{r^{\frac{d}{2}}} \frac{r^{\frac{d}{2}}}{((1-\delta)\frac{N}{2})^{\frac{d}{2}}}  \Bigg( \sum_{z \in \overline{B(\widehat{x},(1-\delta)\frac{N}{2} )} \cap B(0,N)} | \nabla^2 G_N(x,z) |^2 \Bigg)^{\frac{1}{2}} \\
& \leq \frac{C'}{((1-\delta)N)^{\frac{d}{2}}} \cE_N(H_N)^{\frac{1}{2}} + \frac{C'}{((1-\delta)N)^{\frac{d}{2}}} \Bigg( \sum_{z \in \overline{B(\widehat{x},(1-\delta)\frac{N}{2} )} \cap B(0,N)} |\nabla^2 G(x,z)|^2 \Bigg)^{\frac{1}{2}} \\
& \stackrel{\eqref{eq:AppendixLemmaClaim}, \eqref{eq:Mangad_result}}{ \leq} \frac{C''}{((1-\delta)N)^{d-2}}, 
\end{split}
\end{equation}
where we used again that $H_N$ minimizes $\cE_N$. Together,~\eqref{eq:Caccioppoli2} and~\eqref{eq:Caccioppoli3} yield~\eqref{eq:H_pointwiseBound}.
\end{proof}

\bibliographystyle{plain}

\end{document}